\documentclass{amsart}

\usepackage{amsmath, amsthm, amssymb, amsfonts, amscd, latexsym, graphics}
\usepackage{mathrsfs}
\usepackage{enumerate}
\usepackage{geometry}
\usepackage{amsfonts}

\makeatletter
\@namedef{subjclassname@2020}{%
  \textup{2020} Mathematics Subject Classification}
\makeatother

\newcommand{\bbP}{\mathbb{P}}
\newcommand{\bbZ}{\mathbb{Z}}
\newcommand{\bbN}{\mathbb{N}}

\newcommand{\bfg}{\mathbf{g}}
\newcommand{\bfone}{\mathbf{1}}

\newcommand{\Cl}{\mathrm{Cl}}

\newcommand{\lGrMod}{\mathchar`-\mathsf{GrMod}}
\newcommand{\lFilt}{\mathchar`-\mathsf{Filt}}

\DeclareMathOperator{\dimk}{dim} 

\DeclareMathSymbol{\Xdsum}{\mathop}{largesymbols}{88}
\DeclareMathSymbol{\Xtsum}{\mathop}{largesymbols}{80}
\DeclareMathOperator*{\dsum}{\mathchoice{\Xdsum}{\Xdsum}{\Xtsum}{\Xtsum}}

\DeclareMathSymbol{\Xdbigoplus}{\mathop}{largesymbols}{77}
\DeclareMathSymbol{\Xtbigoplus}{\mathop}{largesymbols}{76}
\DeclareMathOperator*{\dbigoplus}{\mathchoice{\Xdbigoplus}{\Xdbigoplus}{\Xtbigoplus}{\Xtbigoplus}}
\newcommand\Dbigoplus{\dbigoplus\limits}

\usepackage{color}

\newcommand{\blue}[1]{#1}
\newtheorem{thm}{Theorem}[section]
\newtheorem{cor}[thm]{Corollary}
\newtheorem{lem}[thm]{Lemma}
\newtheorem{prop}[thm]{Proposition}

\theoremstyle{definition}
\newtheorem{dfn}[thm]{Definition}
\newtheorem*{dfn*}{Definition}
\newtheorem{ex}[thm]{Example}

\theoremstyle{remark}
\newtheorem{rem}[thm]{Remark}

\newcommand{\thmref}[1]{Theorem~\ref{#1}}
\newcommand{\lemref}[1]{Lemma~\ref{#1}}
\newcommand{\corref}[1]{Corollary~\ref{#1}}
\newcommand{\propref}[1]{Proposition~\ref{#1}}

\newcommand{\dfnref}[1]{Definition~\ref{#1}}
\newcommand{\remref}[1]{Remark~\ref{#1}}
\newcommand{\secref}[1]{Section~\ref{#1}}

\numberwithin{equation}{section}

\title[Point modules over the universal enveloping alg. of color Lie alg.]{Point modules over the universal enveloping algebras of color Lie algebras}

\author{Shu Minaki}
\address{
Department of Mathematics, 
Graduate School of Science, 
Tokyo University of Science,
1-3 Kagurazaka, Shinjuku-ku, Tokyo, 162-8601, JAPAN}
\email{1125704@ed.tus.ac.jp} 

\begin{document}

\begin{abstract}
Let $k$ be an algebraically closed field with characteristic zero.
In this paper, we define the notion of a $q'$-Heisenberg normal element of a $\mathbb{Z}$-graded $k$-algebra.
This $q'$-Heisenberg normal element gives the structure of some sets of modules related to point modules.
We also determine the set of point modules over an Artin--Schelter regular algebra obtained as the universal enveloping algebra of a color Lie algebra.
Moreover, we give a concrete integer such that the inverse system of its truncated point schemes is constant.
This is a quantitative answer to a question raised by Artin--Tate--Van den Bergh, in our setting.
\end{abstract}
\subjclass[2020]{Primary 14A22; Secondary 16S38, 17B35, 17B75.}
\keywords{AS-regular algebras, point modules, point schemes, color Lie algebras}
\maketitle
\section{Introduction}
Throughout this paper, let $k$ be an algebraically closed field with characteristic zero.
For a $k$-algebra $A$, an $A$-module refers to a left $A$-module.
For a graded algebra $A$, $A\lGrMod$ denotes the (left) graded module category.
For a (left) graded $A$-module $M$ and $m \in \bbZ$, $M(m)$ denotes the shift $M(m)_{i}=M_{i+m}$ for $i \in \bbZ$.
Moreover, for a graded $k$-vector space $V=\bigoplus_{i \in I} V_{i}$ where $I \neq \emptyset$, $h(V)=\bigcup_{i \in I} V_{i}$ denotes the set of homogeneous elements of $V$, and a map 
$|\quad|:h(V)\setminus\{0\} \rightarrow I$ is defined as $x \mapsto i \text{ such that } x\in V_{i}$.

An Artin--Schelter regular algebra (or AS-regular algebra, in short) was introduced by Artin--Schelter \cite{AS}.
By the fact that a commutative AS-regular algebra is a polynomial algebra, an AS-regular algebra is regarded as a ``non-commutative polynomial algebra''.
Throughout this paper, we assume that an AS-regular algebra is generated in degree $1$.
AS-regular algebras are often studied by a geometric method.
One geometric method is to study a point scheme introduced by Artin--Tate--Van den Bergh \cite{ATV90}\textup{:}
Let $A=k\langle x_{0},x_{1},\ldots, x_{s} \rangle/\langle f_{1}, \ldots, f_{t} \rangle$ be a finitely presented connected $\bbN$-graded $k$-algebra generated in degree $1$, $V$ the $k$-vector space $\langle x_{0},x_{1},\ldots, x_{s} \rangle_{k}$, and $I$ the homogeneous two-sided ideal $\langle f_{1}, \ldots, f_{t}\rangle $ of $k\langle V \rangle$.
  For each integer $i \geq 1$, 
  an element of $I_{i}$ defines a multi-linear function on $(V^{*})^{\times i}$.
  The \textit{$i$-th truncated point scheme} $\Gamma_{i}(A) \subset (\mathbb{P}^{s})^{\times i}$ of $A$ is defined as the zero scheme defined by $I_{i}$.
  Moreover, $\mathrm{pr}^{(i+1)}_{1,i}:\,\Gamma_{i+1}(A) \rightarrow \Gamma_{i}(A)$ denotes the restriction of the projection $(\mathbb{P}^{s})^{\times i+1} \rightarrow (\mathbb{P}^{s})^{\times i}$ onto the first $i$ factors.
   The \textit{point scheme} $\Gamma(A)$ of $A$ is defined as the inverse limit of $\Gamma_{i}(A)$\textup{:} $\Gamma(A):=\displaystyle{\lim_{\leftarrow}\Gamma_{i}(A)}$ (\cite{ATV90}).

For a three-dimensional AS-regular algebra $A$, the point scheme $\Gamma(A)$ of $A$ is isomorphic to a product of the projective space either $\bbP^{2}$ or $\bbP^{1}\times \bbP^{1}$, or a divisor of either.
In \cite{ATV90}, three-dimensional AS-regular algebras were classified by point schemes.
On the other hand, for a four or higher dimensional AS-regular algebra, the dimension of the (truncated) point scheme may be low.
For four-dimensional AS-regular algebras with three or higher Gelfand--Kirillov dimensions, 
Lu--Palmieri--Wu--Zhang \cite{LPWZ} listed the numbers of generators and relations with degrees.
Let $R$ be a set of relations that have degrees on this list; then, the dimension of a truncated point scheme of $R$ is expected to be zero from Chirvasitu--Kanda \cite{CK}.
Moreover, by Vancliff--Van Rompay--Willaert \cite{VVW98} and Vancliff--Veerapen \cite{VV14}, some concrete examples were constructed in which the four-dimensional AS-regular algebras have the zero-dimensional point scheme.
Moreover, Vancliff \cite{Van} constructed a five-dimensional AS-regular algebra whose point scheme has no closed points.
On the other hand, 
the point variety of a skew polynomial algebra, which is a quadratic AS-regular algebra, can have the ``large'' point scheme (Belmans--De Laet--Le Bruyn \cite{BDL} and  Vitoria \cite{Vit}). 
In this paper, by using color Lie algebras, we construct AS-regular algebras of any dimension with the ``large'' point scheme and relations having some degrees.

A color Lie algebra was introduced by Ree \cite{Ree} as a generalized Lie algebra.
As a non-commutative algebro-geometric study at an early stage, Van Oystaeyen--Willaert studied it as a schematic algebra \cite{VW}.
Moreover, for a color Lie algebra $L$, the universal enveloping algebra of $L$ is an AS-regular algebra under some conditions.

Let $A$ be a finitely presented connected $\bbN$-graded $k$-algebra $A$ generated in degree $1$.
 A point module over $A$ is important by the following fact (\cite{ATV90})\textup{:} 
 A one-to-one correspondence exists between the set of isomorphism classes of point modules over $A$ and the set of closed points of the point scheme of $A$.
In this paper, to study a point module, we define the notion of a \textit{$q'$-Heisenberg normal element} of a $\bbZ$-graded $k$-algebra.
\begin{dfn*}[{\textrm{\textbf{\dfnref{dfn-one}}}}]
Let $A$ be a $\bbZ$-graded $k$-algebra.
A regular homogeneous normal element $g$ of $A$ with degree $n \geq 1$ is called a \textit{$q'$-Heisenberg normal element} of $A$ if there exist $u \in k^{\times}$, $x \in A_{1}$, and $y \in A_{n-1}$ that satisfy the following equalities\textup{:}
\begin{enumerate}
  \item[(i)] $g=xy-uyx$;
  \item[(ii)] $0=xg-ugx$;
  \item[(iii)] $0=gy-uyg$.
\end{enumerate}
\end{dfn*}
The relations in the above definition of a $q'$-Heisenberg normal element are similar to defining relations of another $q$-analog of the universal enveloping algebra of the Heisenberg algebra (cf. Kirkman--Small \cite{KS}).
To emphasize this, we use the symbol $q'$ instead of merely using $q$.
This $q'$-Heisenberg normal element gives us the structure of some sets of modules related to a point module.
Assume that $A$ is a finitely presented connected $\bbN$-graded $k$-algebra generated in degree $1$ and $g$ is a $q'$-Heisenberg normal element of $A$.
\propref{prop-fat-point} shows that the set of base-point modules over $A$ completely overlaps one of $A/\langle g \rangle$.
Also, in \thmref{thm-tru-pmod}, we obtain a criterion for the nonexistence of a truncated point module.
Therefore, we obtain the fact that some set of truncated point modules over $A$ completely overlaps one of $A/\langle g \rangle$.
In \thmref{thm-color-var}, using this result, we determine the structure of the set of closed points of the point scheme of the universal enveloping algebra of color Lie algebras.

For a finitely presented connected $\bbN$-graded $k$-algebra $A$, Artin--Tate--Van den Bergh raised the following question \cite[Question 3.16]{ATV90}\textup{:}
When $A$ is noetherian, is the inverse system $\{\Gamma_{i}(A)\}$ constant for large $i$?
A partial answer to this question was given by Artin--Zhang \cite{AZ01}\textup{:} 
When $A$ is strongly noetherian, $\{\Gamma_{i}(A)\}$ is constant for some $i >0 $.
In \corref{cor-color-sch}, for the universal enveloping algebra $U(L)$ of a color Lie algebra $L$, we give a concrete integer $i$ such that $\{\Gamma_{i}(U(L))\}$ is constant.
Note that this integer $i$ is obtained by a property of the color Lie algebra.

The remainder of this paper is organized as follows\textup{:}
 In \secref{sec-pre}, we recall some definitions and properties of the quasi-Veronese algebra defined by Mori \cite{Mori}; the twisted algebra defined by Zhang \cite{Zhang}; dehomogenizations, point modules, and color Lie algebras.
 In \secref{sec-gen}, we define the notion of a $q'$-Heisenberg normal element and prove basic properties.
In \propref{prop-fat-point}, we prove the nonexistence of a $g$-torsionfree base-point module for a $q'$-Heisenberg normal element $g$.
In \secref{sec-trunc}, we prove \thmref{thm-tru-pmod}, which is a theorem of the nonexistence of a $g$-torsionfree module with some conditions.
By \thmref{thm-tru-pmod}, we determine the set of closed points of the point schemes of the universal enveloping algebra of color Lie algebras in \thmref{thm-color-var}.
At the end of this section, we show that the inverse system of the truncated point schemes of it is constant in \corref{cor-color-sch}.
\section{Preliminaries}\label{sec-pre}
\subsection{Graded and filtered algebras}\label{subsec-gr-filt}
In this subsection, we recall the categorical equivalences obtained by quasi-Veronese algebras \cite{Mori}, twisted algebras \cite{Zhang}, and dehomogenizations (see \cite{HvO}).
First, we recall the definition of the quasi-Veronese algebra.
\begin{dfn}[{\cite[Definition 3.7]{Mori}}]
  Let $A=\bigoplus_{i \in \bbZ}A_{i}$ be a $\bbZ$-graded $k$-algebra and $r \geq 1$ a positive integer.
  A $\bbZ$-graded $k$-algebra $A^{[r]}$ is called the \textit{$r$-th quasi-Veronese algebra} of $A$ defined as follows\textup{:}
  \[
  A^{[r]}:=\bigoplus_{i \in \bbZ}
\left(\begin{array}{cccc}
A_{ri} & A_{ri+1} & \cdots & A_{ri+r-1}\\
A_{ri-1} & A_{ri}  & \cdots & A_{ri+r-2}\\ 
\vdots && \ddots & \vdots \\
A_{ri-r+1} & A_{ri-r+2} &  \cdots & A_{ri}\\ 
\end{array}\right)
  \]
  with the multiplication $(a_{i,j})(b_{i,j})=\left( \sum_{l=0}^{r-1}a_{l,j}b_{i,l} \right)$
  for $(a_{i,j}) \in (A^{[r]})_{p}$, $(b_{i,j}) \in (A^{[r]})_{q}$ for $p,q \in \bbZ$.
\end{dfn}
The definition of the quasi-Veronese algebra gives the following categorical equivalence\textup{:}
\begin{lem}[{\cite[Lemma 3.9]{Mori}}]\label{lem-quasi-Vero}
  For a positive integer $r \geq 1$, let $A=\bigoplus_{i \in \bbZ}A_{i}$ be a $\bbZ$-graded $k$-algebra and $A^{[r]}$ the $r$-th quasi-Veronese algebra of $A$.
  Then, the following categorical equivalence holds\textup{:} 
  \[
  A\lGrMod \cong A^{[r]}\lGrMod.
  \]
  This categorical equivalence is given by the functor $Q: A\lGrMod \rightarrow A^{[r]}\lGrMod$\textup{;}
  \[
  Q(M):=\bigoplus_{i \in \bbZ}\left(\bigoplus_{j=0}^{r-1}M_{ir-j}\right)
  \]
  with the action $(a_{i,j})(m_{i})=\left(\sum_{l=0}^{r-1}a_{l,i}m_{l}\right)$ for $(a_{i,j}) \in (A^{[r]})_{p}, (m_{i}) \in (Q(M))_{q}$ for $p,q \in \bbZ$.
\end{lem}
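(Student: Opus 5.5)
The plan is to show that $Q$ is well defined, fully faithful and essentially surjective. The last part goes by building an explicit quasi-inverse $Q'\colon A^{[r]}\lGrMod\rightarrow A\lGrMod$. Throughout, I index the rows and columns of a matrix in $(A^{[r]})_{p}$ by $0,\ldots,r-1$, so that its $(i,j)$-entry lies in $A_{rp+j-i}$.

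\textbf{Step 1: $Q$ is a functor.} For $(m_{l})\in Q(M)_{q}$ we have $m_{l}\in M_{rq-l}$. For $(a_{i,j})\in(A^{[r]})_{p}$, the product $a_{l,i}m_{l}$ then lies in $A_{rp+i-l}M_{rq-l}$, which is contained in $M_{r(p+q)-i}$ only with the correct reading of the index convention in the multiplication rule. So the first task is to fix the convention carefully, transposing the stated formula if needed, so that the $i$-th component of the product lands in $M_{r(p+q)-i}$. Associativity of the action follows from associativity in $A$ by the same index calculation used for the multiplication of $A^{[r]}$. The identity matrix acts as the identity, because its diagonal entries are $1\in A_{0}$. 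On morphisms, a degree-preserving $f\colon M\rightarrow N$ is sent componentwise to $Q(f)=\bigoplus_{i}\bigoplus_{j}f|_{M_{ir-j}}$, which commutes with the matrix action since $f$ is $A$-linear.

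\textbf{Step 2: full faithfulness.} Faithfulness is immediate, since each $M_{n}$ appears exactly once as a component of $Q(M)$: write $n=ir-j$ with $0\le j\le r-1$. For fullness, take a graded $A^{[r]}$-homomorphism $\phi\colon Q(M)\rightarrow Q(N)$. Acting by the idempotent elementary matrices $e_{jj}\in(A^{[r]})_{0}$ shows that $\phi$ preserves each summand $M_{ir-j}$. So $\phi$ comes from a family of linear maps $f_{n}\colon M_{n}\rightarrow N_{n}$. Compatibility with the action of a matrix having a single nonzero entry $a\in A_{d}$ then gives $f(am)=af(m)$ for every homogeneous $a$ and $m$. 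Hence $f$ is $A$-linear and $Q(f)=\phi$.

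\textbf{Step 3: essential surjectivity.} Given a graded $A^{[r]}$-module $N$, decompose $N_{i}=\bigoplus_{j}e_{jj}N_{i}$, which is possible because $\sum_{j}e_{jj}=1$ and these idempotents are orthogonal. Define $M_{ir-j}:=e_{jj}N_{i}$. For $a\in A_{d}$ and $m\in M_{n}$, write $n=ir-j$ and $n+d=i'r-j'$, and define $am:=E\cdot m$, where $E\in(A^{[r]})_{i'-i}$ is the matrix with $a$ in the position that sends slot $j$ to slot $j'$ and zeros elsewhere. That position exists precisely because the entry degrees in a matrix of degree $i'-i$ are $r(i'-i)+(\text{index shift})$. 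Associativity and unitality of this $A$-action follow from the multiplication of elementary matrices. By construction $Q(M)\cong N$, since a general matrix acts as the sum of its elementary pieces.

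The main obstacle I expect is purely bookkeeping. The entry-degree convention of $A^{[r]}$ and the component convention of $Q(M)$, together with the index order in the stated formula $(a_{i,j})(m_{i})=\bigl(\sum_{l}a_{l,i}m_{l}\bigr)$, have to be matched consistently. I would first check the case $r=2$ explicitly to pin down the orientation, and then run the general argument with the elementary idempotents $e_{jj}$, which make Steps 2 and 3 formal.
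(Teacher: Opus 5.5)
Your proposal is correct. The paper gives no proof of this lemma; it is quoted from Mori, so there is no argument in the text to compare with step by step. Your route is a complete hands-on proof that makes the quasi-inverse explicit: you split everything with the orthogonal idempotents $e_{jj}\in(A^{[r]})_{0}$ and recover the $A$-action from single-entry matrices.

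The loose end in your Step~1 closes without changing any formula. In both the multiplication rule of $A^{[r]}$ and the action formula, read the subscript $a_{i,j}$ as the entry in row $j$ and column $i$ of the displayed matrix, i.e.\ $a_{i,j}\in A_{rp+i-j}$. Then $\sum_{l}a_{l,j}b_{i,l}$ is the ordinary matrix product and $\sum_{l}a_{l,i}m_{l}$ is the ordinary matrix times column vector $(m_{0},\ldots,m_{r-1})^{T}$. The degrees work out: $a_{l,i}m_{l}\in A_{rp+l-i}M_{rq-l}\subseteq M_{r(p+q)-i}$. With your row-$i$/column-$j$ reading, the action formula is indeed not homogeneous, as you suspected. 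In Step~3, the slot carrying $a\in A_{d}$ from component $j$ to component $j'$ is row $j'$, column $j$, whose degree $r(i'-i)+j-j'$ equals $d$. The rest of your Steps~2 and~3 goes through verbatim.

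The more conceptual alternative is graded Morita theory. Take $P=\bigoplus_{j=0}^{r-1}A(j)$. Then $Q(M)_{i}\cong\mathrm{Hom}_{A\lGrMod}(P,M(ri))$, each $A(n)$ is a direct summand of some shift $P(ri)$, and $A^{[r]}$ is (up to the usual opposite-ring convention) the shifted endomorphism ring $\bigoplus_{i}\mathrm{Hom}_{A\lGrMod}(P,P(ri))$. That viewpoint explains where the matrix shape of $A^{[r]}$ and the formula for $Q$ come from. Your computation avoids the general machinery, at the cost of the index bookkeeping you correctly identified as the only real obstacle.
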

Next, we recall the definition of the twisted algebra by Zhang \cite{Zhang}.
Here, to deal with the left module, we recall the left version of it.
\begin{dfn}[{\cite[Definition 4.1]{Zhang}}]
Let $A=\bigoplus_{i \in \bbZ}A_{i}$ be a $\bbZ$-graded $k$-algebra.
A set of graded $k$-linear automorphisms $\nu=\{\nu_{i} \mid i \in \bbZ \}$ of $A$ is called an \textit{\textup{(}$\ell$-\textup{)}twisting system} if $\nu$ satisfies the following equality\textup{:}
\[
\nu_{l}(\nu_{j}(a)b)=\nu_{j+l}(a)\nu_{l}(b)
\,\text{ for } 
i,j,l \in \bbZ, a \in A_{i}, \text{ and } b \in A_{j}.
\]
\end{dfn}
\begin{dfn}[{\cite[Proposition and Definition 4.2]{Zhang}}]
  Let $A=\bigoplus_{i \in \bbZ}A_{i}$ be a $\bbZ$-graded $k$-algebra and $\nu=\{\nu_{i} \mid i \in \bbZ \}$ an ($\ell$-)twisting system of $A$.
  A $\bbZ$-graded $k$-algebra ${}^{\nu}A$ is called the \textit{\textup{(}$\ell$-\textup{)}twisted algebra} defined as follows\textup{:}

    ${}^{\nu}A:=\Dbigoplus_{i \in \bbZ}A_{i}$  
    with the multiplication $a\circ b:= \nu_{j}(a)b$, where $a \in {}^{\nu}A_{i}$, $b \in {}^{\nu}A_{j}$, and $i,j \in \bbZ$.
\end{dfn}
The definition of the ($\ell$-)twisted algebra gives the equivalence of the (left) graded module categories.
\begin{prop}[{\cite[Corollary 4.4]{Zhang}}]\label{prop-Zhang-twi}
  Let $A=\bigoplus_{i \in \bbZ}A_{i}$ be a $\bbZ$-graded $k$-algebra and $\nu$ a \textup{(}$\ell$-\textup{)}twisting system of $A$.
  Then, the following categorical equivalence holds\textup{:}
  \[
  A\lGrMod \cong {}^{\nu}A\lGrMod .
  \]
  This categorical equivalence is given by the functor ${}^{\nu}(-): A\lGrMod \rightarrow {}^{\nu}A\lGrMod $\textup{;}
  
  ${}^{\nu}M:=\bigoplus_{i \in \bbZ}M_{i}$  
     with the action $a\circ m:= \nu_{j}(a)m$, where $a \in {}^{\nu}A_{i}$, $m \in {}^{\nu}M_{j}$, and $i,j \in \bbZ$.
\end{prop}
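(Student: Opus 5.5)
We prove \propref{prop-Zhang-twi} by checking directly that ${}^{\nu}(-)$ is a functor with an explicit inverse built from the inverse twisting system, so that it is even an isomorphism of categories.

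\textbf{Step 1: ${}^{\nu}M$ is a graded ${}^{\nu}A$-module.} For $a \in A_{i}$, $b \in A_{j}$ and $m \in M_{l}$ we have $b\circ m = \nu_{l}(b)m \in M_{j+l}$ and $a\circ b = \nu_{j}(a)b$. Hence
\[
(a\circ b)\circ m=\nu_{l}(\nu_{j}(a)b)m = \nu_{j+l}(a)\nu_{l}(b)m = a\circ(b\circ m),
\]
where the middle equality is exactly the twisting-system identity. Unitality also needs checking. From the identity with $a=b=1$ (both of degree $0$) we get $\nu_{l}(\nu_{0}(1))=\nu_{l}(1)\nu_{l}(1)$. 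I expect to normalise $\nu_{0}=\mathrm{id}$, or at least to use that the unit of ${}^{\nu}A$ is $e=\nu_{0}^{-1}(1)$. This is the standard point in \cite{Zhang}, and it yields $\nu_{l}(e)=1$ for all $l$, so $e\circ m=m$. The grading is preserved because each $\nu_{l}$ is graded, so the action sends ${}^{\nu}A_{i}\times{}^{\nu}M_{l}$ into ${}^{\nu}M_{i+l}$.

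\textbf{Step 2: functoriality.} A graded homomorphism $f:M\to N$ satisfies $f(a\circ m)=f(\nu_{l}(a)m)=\nu_{l}(a)f(m)=a\circ f(m)$. So setting ${}^{\nu}f=f$ gives a functor that is faithful. Composition is preserved trivially.

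\textbf{Step 3: an inverse functor.} Define $\nu^{-1}=\{\nu_{i}^{-1}\}$. We show it is a twisting system of ${}^{\nu}A$ with ${}^{\nu^{-1}}({}^{\nu}A)=A$. For $a,b$ in ${}^{\nu}A$ the twisted product is
\[
\nu_{j}^{-1}(a)\circ b=\nu_{j}(\nu_{j}^{-1}(a))b=ab,
\]
which recovers the original multiplication. Similarly ${}^{\nu^{-1}}({}^{\nu}M)=M$ as modules. The only real work is verifying the twisting identity for $\nu^{-1}$ relative to $\circ$:
\[
\nu_{l}^{-1}(\nu_{j}^{-1}(a)\circ b)=\nu_{j+l}^{-1}(a)\circ\nu_{l}^{-1}(b).
\]
To do this, apply $\nu_{l}$ to both sides. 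The left side becomes $\nu_{j}^{-1}(a)\circ b=ab$. The right side becomes $\nu_{l}\bigl(\nu_{l}(\nu_{j+l}^{-1}(a))\,\nu_{l}^{-1}(b)\bigr)$, where I use that $\nu_{l}^{-1}(b)$ has degree $j$. This must be rewritten using the original identity (with $\nu_{j+l}^{-1}(a)$ in place of $a$) until it reduces to $ab$. This index bookkeeping is the step I expect to be the main obstacle. If the direct manipulation does not close up, the fallback is to use the alternative description of the identity, $\nu_{l}(a'b)=\nu_{j+l}\nu_{j}^{-1}(a')\,\nu_{l}(b)$, obtained by substituting $a'=\nu_{j}(a)$, which makes the inversion transparent.

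Once Step 3 holds, ${}^{\nu^{-1}}(-)$ and ${}^{\nu}(-)$ are mutually inverse on objects and are the identity on morphisms. This gives the equivalence $A\lGrMod\cong{}^{\nu}A\lGrMod$ asserted in \propref{prop-Zhang-twi}.
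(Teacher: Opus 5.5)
The paper gives no proof here and simply cites Zhang. Your argument is the standard direct proof: ${}^{\nu}(-)$ is the identity on underlying graded spaces and on morphisms, Step 1 is literally the twisting identity, and the inverse functor comes from $\nu^{-1}=\{\nu_i^{-1}\}$. It even gives an isomorphism of categories.

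There is, however, a subscript slip in Step 3, and it is exactly what makes that step look like an obstacle. The product in ${}^{\nu}A$ is $x\circ y=\nu_{|y|}(x)\,y$, and $\nu_l^{-1}(b)\in A_j$, so the right-hand side is
\[
\nu_{j+l}^{-1}(a)\circ\nu_l^{-1}(b)=\nu_j\bigl(\nu_{j+l}^{-1}(a)\bigr)\,\nu_l^{-1}(b),
\]
with $\nu_j$, not $\nu_l$, on the inner factor. With the correct index, apply the original identity to $a'=\nu_{j+l}^{-1}(a)\in A_i$ and $b'=\nu_l^{-1}(b)\in A_j$. This gives at once
\[
\nu_l\bigl(\nu_j(a')\,b'\bigr)=\nu_{j+l}(a')\,\nu_l(b')=ab,
\]
which is also $\nu_l$ of the left-hand side. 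Injectivity of $\nu_l$ finishes the check, and no fallback is needed. With the inner $\nu_l$ as you wrote it, the identity does not apply and the computation would indeed not close.

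On the unit, you need neither to normalise $\nu_0$ nor to defer to Zhang; normalising would change the twisting system and need a separate argument. A direct argument runs as follows.
\begin{itemize}
\item Put $b=1$, $l=0$ in the twisting identity and use surjectivity of $\nu_0$. This gives $\nu_0(x)=x\,\nu_0(1)$ for all $x$.
\item Bijectivity of this map makes $c_0=\nu_0(1)$ a unit. Hence $e=\nu_0^{-1}(1)=c_0^{-1}$ and $a\circ e=\nu_0(a)e=a$.
\item The identity with $b=e$ then gives $\nu_l(a)=\nu_l(\nu_0(a)e)=\nu_l(a)\,\nu_l(e)$.
\item Choosing $a=\nu_l^{-1}(1)$ yields $\nu_l(e)=1$, hence $e\circ b=b$ and $e\circ m=m$.
\end{itemize}
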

We recall the definition and some facts of the dehomogenization (see \cite{HvO} for details).
\begin{dfn}[{see \cite[Chapter I. 4.3]{HvO}}]
Let $A=\bigoplus_{i \in \bbZ} A_{i}$ be a $\bbZ$-graded $k$-algebra, $M=\bigoplus_{i \in \bbZ}M_{i}$ a graded $A$-module, and $T$ a regular central element of $A$ with degree $1$.
A filtered ring $\mathcal{R}$ with the filtration $F\mathcal{R}=\{F_{i}\mathcal{R}\}_{i \in \bbZ}$ is called the \textit{dehomogenization} of $A$ with respect to $T$, defined as the following\textup{:}
\[
\mathcal{R}:=A/\langle 1-T \rangle,\quad
F_{i}\mathcal{R}:=(A_{i}+\langle 1-T \rangle)/\langle 1-T \rangle
\text{ for }
i \in \bbZ.
\]

Also, a filtered $\mathcal{R}$-module $\mathcal{M}$ with the filtration $F\mathcal{M}=\{F_{i}\mathcal{M}\}_{i \in \bbZ}$ is called the \textit{dehomogenization} of $M$ with respect to $T$, defined as the following\textup{:}
\[
\mathcal{M}:=M/\langle 1-T \rangle M,\quad
F_{i}\mathcal{M}:=(M_{i} + \langle 1-T \rangle M)/\langle 1-T \rangle M
\text{ for }
i \in \bbZ.
\]
\end{dfn}
The dehomogenization satisfies some useful properties, as stated below.
\begin{lem}[{see \cite[Chapter I 4.3 Lemma 2]{HvO}}]
Let $A=\bigoplus_{i \in \bbZ} A_{i}$ be a $\bbZ$-graded $k$-algebra, $M=\bigoplus_{i \in \bbZ}M_{i}$ a graded $A$-module, and $T$ a regular central element of $A$ with degree $1$.
Then, the equality $(1-T)A \cap A_{i}=0$ holds for $i \in \bbZ$.
Also, if $M$ is a $T$-torsionfree $A$-module, then the equality $(1-T)M \cap M_{i}=0$ holds for $i \in \bbZ$.
\end{lem}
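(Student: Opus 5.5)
Both assertions reduce to comparing homogeneous components. The element $1-T$ is not homogeneous, so an element of $(1-T)A$ or $(1-T)M$ should only be homogeneous when it is zero. The plan is to write an arbitrary element of $(1-T)A\cap A_i$ as $(1-T)a$, decompose $a$ into homogeneous components, and show every component vanishes using only that $T$ is homogeneous of degree $1$ and regular. Centrality of $T$ is used only at the start, to write elements of the ideal $\langle 1-T\rangle$ as $(1-T)a$. In the statement, $(1-T)A$ means the set $\{(1-T)a \mid a\in A\}$, which equals the two-sided ideal $\langle 1-T\rangle$ because $T$ is central.

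\textbf{The algebra case.} Take $a\in A$ with $(1-T)a\in A_i$. Write $a=\sum_{j=p}^{q}a_j$ with $a_j\in A_j$ and $a_q\neq 0$, assuming $a\neq 0$ for contradiction. Then
\[
(1-T)a=\sum_{j}a_j-\sum_j Ta_j,
\]
and $Ta_j\in A_{j+1}$. Its component of degree $q+1$ is $-Ta_q$, which is nonzero because $T$ is regular. Its component of degree $p$ is $a_p\neq 0$. Since $p<q+1$, the element $(1-T)a$ has at least two nonzero homogeneous components, so it cannot lie in the single component $A_i$. This contradiction forces $a=0$, and hence $(1-T)A\cap A_i=0$.

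\textbf{The module case.} The same argument applies to $M$. Here $(1-T)M$ means $\{(1-T)m \mid m\in M\}$, which equals $\langle 1-T\rangle M$ by centrality. Decompose $m=\sum_{j=p}^{q} m_j$ with $m_p\neq 0$ and $m_q\neq 0$. The degree $p$ component of $(1-T)m$ is $m_p$, and the degree $q+1$ component is $-Tm_q$. The latter is nonzero because $M$ is $T$-torsionfree, meaning $Tm'=0$ implies $m'=0$. As before, $(1-T)m$ then has two nonzero components in distinct degrees, so it is not homogeneous unless $m=0$.

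\textbf{Main obstacle.} The only point needing care is the degree bookkeeping: it must be the lowest and highest components of $(1-T)a$ that survive. The lowest survives because $T$ raises degree by $1$, so nothing from $Ta$ reaches degree $p$. The highest survives because of regularity, or torsionfreeness in the module case. I expect no further difficulty.
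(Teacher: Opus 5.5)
Your proof is correct and is the standard lowest/highest-component comparison behind the cited lemma in \cite{HvO}; the paper itself gives no proof beyond that reference. The argument works because $a_p$ and $-Ta_q$ both survive in $(1-T)a$, the latter by regularity or $T$-torsionfreeness. One cosmetic fix: in the algebra case you should also fix $p$ as the lowest degree with $a_p\neq 0$, as you do in the module case. Alternatively, you could deduce the algebra case from the module case with $M=A$, which is $T$-torsionfree because $T$ is regular.
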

\begin{prop}[{see \cite[Chapter I. 4.3.6]{HvO}}]
Let $A=\bigoplus_{i \in \bbZ} A_{i}$ be a $\bbZ$-graded $k$-algebra, $T$ a regular central element $T$ of $A$ with degree $1$, and $\mathcal{R}$ the dehomogenization of $A$ with respect to $T$.
Suppose that $\mathsf{F}_{T}$ is the full subcategory of $A\lGrMod$ consisting of $T$-torsionfree $A$-modules
and $\mathcal{R}\lFilt$ is the category of filtered $\mathcal{R}$-modules.
Then\textup{,} the dehomogenization gives the categorical equivalence as follows\textup{:}
  \[
  \mathcal{R}\lFilt \cong \mathsf{F}_{T}.
  \]
\end{prop}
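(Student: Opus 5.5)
The statement is the equivalence $\mathcal{R}\lFilt \cong \mathsf{F}_{T}$ given by dehomogenization. I will build explicit functors in both directions and check that they are mutually quasi-inverse.

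\textbf{Dehomogenization functor.} Define $D:\mathsf{F}_{T}\to \mathcal{R}\lFilt$ by $D(M)=\mathcal{M}$, with the filtration $F_{i}\mathcal{M}$ from the definition. First I check that $F\mathcal{M}$ is an exhaustive, increasing filtration compatible with $F\mathcal{R}$:
\begin{itemize}
\item Since $T$ is central and $Tm-m\in\langle 1-T\rangle M$, we have $F_{i}\mathcal{M}\subseteq F_{i+1}\mathcal{M}$.
\item The products satisfy $F_{i}\mathcal{R}\cdot F_{j}\mathcal{M}\subseteq F_{i+j}\mathcal{M}$.
\end{itemize}
A graded morphism $M\to N$ induces a filtered morphism, so $D$ is a functor.

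\textbf{Rees functor.} Define $\widetilde{(-)}:\mathcal{R}\lFilt\to\mathsf{F}_{T}$ by
\[
\widetilde{\mathcal{M}}=\bigoplus_{i}F_{i}\mathcal{M},
\]
where $F_{i}\mathcal{M}$ sits in degree $i$. The $A$-action is: $a\in A_{j}$ acts through its image $\bar a\in F_{j}\mathcal{R}$, mapping $F_{i}\mathcal{M}\to F_{i+j}\mathcal{M}$. The element $T$ then acts as the inclusion $F_{i}\mathcal{M}\hookrightarrow F_{i+1}\mathcal{M}$, because $\bar T=1$. An inclusion is injective, so $\widetilde{\mathcal{M}}$ is $T$-torsionfree.

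\textbf{Rees then dehomogenize.} For a filtered $\mathcal{M}$, the natural map
\[
\widetilde{\mathcal{M}}/(1-T)\widetilde{\mathcal{M}}\to\mathcal{M},\qquad \textstyle\sum m_{i}\mapsto \sum m_{i},
\]
is surjective because the filtration is exhaustive. For injectivity, if $\sum m_{i}=0$ in $\mathcal{M}$, then I reduce modulo $(1-T)$ by pushing every component to the top degree using $m_{i}\equiv T^{N-i}m_{i}$. The element becomes homogeneous of degree $N$ and equals $\sum m_{i}=0$. This map sends $F_{i}$ onto $F_{i}\mathcal{M}$, so it is a filtered isomorphism.

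\textbf{Dehomogenize then Rees.} For $M\in\mathsf{F}_{T}$, consider the graded map
\[
M\to\widetilde{\mathcal{M}},\qquad m\in M_{i}\mapsto \text{class of } m \text{ in } F_{i}\mathcal{M}.
\]
It is surjective degreewise by definition of $F_{i}\mathcal{M}$. Injectivity in each degree is exactly the preceding lemma: $(1-T)M\cap M_{i}=0$. Naturality in both directions is clear.

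\textbf{Main obstacle.} The delicate points are the use of $T$-torsionfreeness, and checking that morphisms correspond bijectively. Take a filtered map $f:\mathcal{M}\to\mathcal{N}$; it gives graded maps $F_{i}\mathcal{M}\to F_{i}\mathcal{N}$ commuting with $A$. Conversely, a graded map of Rees modules commutes with $T$, so it is compatible with the inclusions. It therefore glues on the union $\mathcal{M}=\bigcup F_{i}\mathcal{M}$ to a filtered map.

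Throughout I assume filtrations are exhaustive, as in the standard convention of \cite{HvO}. Without that assumption the surjectivity step for Rees-then-dehomogenize fails, since $\widetilde{\mathcal{M}}$ only sees $\bigcup F_{i}\mathcal{M}$.
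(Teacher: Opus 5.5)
Your proof is correct. The Rees functor $\mathcal{M}\mapsto\bigoplus_i F_i\mathcal{M}$ and dehomogenization are quasi-inverse: $T$-torsionfreeness enters exactly through $(1-T)M\cap M_i=0$, and exhaustiveness enters through the surjectivity of $\widetilde{\mathcal{M}}/(1-T)\widetilde{\mathcal{M}}\to\mathcal{M}$.

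The paper gives no proof of its own beyond citing \cite[Chapter I. 4.3.6]{HvO}, and your argument is essentially the standard Rees-module argument behind that reference. There one usually first identifies $A$ with the Rees ring $\bigoplus_i F_i\mathcal{R}$; you sidestep this by letting $A$ act through $A\to\bigoplus_i F_i\mathcal{R}$.
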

\subsection{Point schemes and point modules}\label{subsec-point}
Let $A=k\langle  x_{0},x_{1}, \ldots, x_{s} \rangle/ \langle f_{1}, \ldots, f_{t} \rangle$ be a finitely presented connected $\bbN$-graded $k$-algebra generated in degree $1$ with generators $x_{i}$ with degree $1$ and relations $f_{j}$ with degree greater than or equal to $2$.
In this subsection, we recall the definition and the propositions of a point module over $A$. 
\begin{dfn}[{\cite{ATV90, OF}}]
Let $A$ be a finitely presented connected $\bbN$-graded $k$-algebra generated in degree $1$.
\begin{enumerate}
  \item
 A graded $A$-module $P=\bigoplus_{i \in \mathbb{Z}}P_{i}$ is called a \textit{point module} over $A$ if $P=AP_{0}$ and $P$ satisfies the equality
  $\dimk_{k}P_{i}=
  \begin{cases}
    1 &\text{for $i \geq 0$},\\
    0 &\text{for $i <0$}.
  \end{cases}$
  \item
For each integer $d \geq 1$, a graded $A$-module $P=\bigoplus_{i \in \mathbb{Z}}P_{i}$ is called a \textit{truncated point module of length $d+1$} over $A$ if $P=AP_{0}$ and $P$ satisfies the equality
  $\dimk_{k}P_{i}=
  \begin{cases}
    1 &\text{for $0 \leq i \leq d$},\\
    0 &\text{otherwise}.
   \end{cases}$
\end{enumerate}
\end{dfn}
Moreover, we recall the definition of a base-point module.
\begin{dfn}[{\cite{cCV}}]
 Let $A$ be a finitely presented connected $\bbN$-graded $k$-algebra generated in degree $1$ and $c \geq 1$ an integer. A graded $A$-module $M=\bigoplus_{i \in \mathbb{Z}}M_{i}$ is called a \textit{base-point module} over $A$ if $M=AM_{0}$, $M$ satisfies the following equality\textup{:}
\[
\dimk_{k}M_{i}=
  \begin{cases}
    c &\text{for $i \geq 0$};\\
    0 &\text{for $i < 0$};
   \end{cases}
\]
  and $M$ is $1$-critical with respect to the Gelfand--Kirillov dimension.
\end{dfn}
We remark, for a base-point module $M$, when $c=1$, $M$ is a point module. 
Also, when $c \geq 2$, $M$ is called a \textit{fat point module} (\cite{Ar90}).

For a $k$-algebra $A$, an element $g$ of $A$ satisfying an equality $gA=Ag$ is called a \textit{normal element}.
Also, for an $A$-module $M$, $M$ is called a \textit{$g$-torsionfree module} if $gm=0$ implies $m=0$ for $m \in M$.
To recall the well-known property for a point module and a normal element, we show a useful property as follows\textup{:}
\begin{lem}\label{lem-trunc-basic}
  Let $A$ be a finitely presented connected $\bbN$-graded $k$-algebra generated in degree $1$ 
  and $P=\bigoplus_{i=0}^{d}k m_{i}$ a truncated point module of length $d+1$ over $A$ \textup{(}resp. $P'=\bigoplus_{j \in \bbZ}k m'_{j}$ is a point module over $A$\textup{)}.
  Suppose that $g$ is a homogeneous normal element of $A$ with degree $n$.
  Then, $g m_{i-1}=0$ holds if and only if $g m_{i}=0$ holds for $1 \leq i \leq d-n$ \textup{(}resp. $g m'_{j-1}=0$ holds if and only if $g m'_{j}=0$ holds for $j \geq 1$\textup{)}.
\end{lem}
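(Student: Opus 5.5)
Since $P$ is cyclic and generated in degree $0$, with $P=AP_{0}$ and each $P_i$ one-dimensional, $P_i=A_1P_{i-1}$ for $1\le i\le d$. Concretely, for each $1\le i\le d$ there is $x\in A_{1}$ with $m_{i}=xm_{i-1}$, and $A_{1}m_{i-1}=km_{i}$. The plan is to move $g$ past such degree-one elements using normality: $gA_{1}=A_{1}g$ because $g$ is homogeneous and normal.

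The implication ``$gm_{i-1}=0\Rightarrow gm_{i}=0$'' comes first. Write $m_{i}=xm_{i-1}$ with $x\in A_{1}$. By normality there is $x'\in A_{1}$ with $gx=x'g$. Then
\[
gm_{i}=gxm_{i-1}=x'gm_{i-1}=0.
\]
This direction needs no degree restriction beyond $gm_i$ making sense.

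For the converse, suppose $gm_{i}=0$ but $gm_{i-1}\neq 0$. Then $gm_{i-1}$ is a nonzero element of $P_{i-1+n}$, which requires $i-1+n\le d$. Since $P_{i+n}=A_{1}P_{i-1+n}$, which is nonzero when $i+n\le d$, we get $A_{1}gm_{i-1}=km_{i+n}\neq 0$, hence $A_{1}gm_{i-1}\neq 0$. Normality gives $A_{1}g=gA_{1}$, so
\[
A_{1}gm_{i-1}=gA_{1}m_{i-1}=g\,km_{i}=0,
\]
which is a contradiction. The hypothesis $i\le d-n$ is exactly what guarantees $i+n\le d$, so that $P_{i+n}\neq 0$. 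For a point module there is no upper bound, so the argument works for all $j\ge1$.

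The only delicate point is this degree bookkeeping in the converse: we need the target degree $i+n$ to lie inside the support of $P$ so that $A_1$ acts nontrivially on $P_{i-1+n}$. The rest is immediate from $gA_1=A_1g$. We do not need $g$ regular, only homogeneous and normal.
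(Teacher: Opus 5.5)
Your proof is correct and follows essentially the same route as the paper: the forward direction uses $ga=a'g$ for some $a\in A_1$ with $am_{i-1}=m_i$, and the converse derives the contradiction $0\neq m_{i+n}\in A_1gm_{i-1}=gA_1m_{i-1}=0$, which is the paper's argument with the specific elements $b,b'$ (where $bg=gb'$) replaced by the identity $A_1g=gA_1$. Your degree bookkeeping, namely that $i\le d-n$ guarantees $P_{i+n}\neq 0$, makes explicit where the hypothesis on $i$ is used.
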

\begin{proof}
  Since a truncated point module $P$ over $A$ is cyclic, 
  there exists an element $a \in A_{1}$ such that $a m_{i-1}=m_{i}$.
  Let $a'$ be an element satisfying $a'g =ga$.
  If $g m_{i-1}=0$, then $gm_{i}= g a m_{i-1} = a' g m_{i-1} =0$.
  Conversely, we assume that $g m_{i}=0$ and $g m_{i-1} \neq 0$.
  Let $b, b' \in A_{1}$ be elements satisfying $b g m_{i-1}=m_{i+n}$ and $bg =gb'$.
  The equality $m_{i+n}=b g m_{i-1}=g b' m_{i-1} =0$ leads to a contradiction.
\end{proof}
\dfnref{dfn-trunc-torfree} gives a useful property corresponding to a $g$-torsionfreeness of a module by \lemref{lem-trunc-basic}.
\begin{dfn}\label{dfn-trunc-torfree}
Let $A$ be a finitely presented connected $\bbN$-graded $k$-algebra generated in degree $1$ 
and $g$ a homogeneous normal element of $A$ with degree $n$.
For an integer $d \geq n$, a truncated point module $P=\bigoplus_{i=0}^{d}P_{i}$ of length $d+1$ over $A$ is called a \textit{truncated $g$-torsionfree point module of length $d+1$} over $A$ if the following equality is satisfied\textup{:}
\[
0=\left\{ a \in \Dbigoplus_{i=0}^{d-n}P_{i} \middle| ga=0 \right\}.
\]
\end{dfn}
\propref{prop-torfree-gP} is a generalization of well-known properties of a point module over $A$.
\begin{prop}\label{prop-torfree-gP}
Let $A$ be a finitely presented connected $\bbN$-graded $k$-algebra generated in degree $1$ 
and $P$ a truncated point module of length $d+1$ over $A$ \textup{(}resp. $P'$ a point module over $A$\textup{)}.
If $P$ is not a truncated $g$-torsionfree point module of length $d+1$ over $A$ \textup{(}resp. $P'$ is not a $g$-torsionfree point module over $A$\textup{)}, then $gP=0$ \textup{(}resp. $gP'=0$\textup{)}.
\end{prop}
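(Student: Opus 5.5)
The plan is to reduce the hypothesis to the vanishing of $g$ on a single basis vector of $P$, and then spread that vanishing to all of $P$ using \lemref{lem-trunc-basic}. Write $P=\bigoplus_{i=0}^{d}k m_{i}$ with $m_{i}\neq 0$, and let $n$ be the degree of $g$.

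\textbf{Step 1: find one basis vector killed by $g$.} Since $P$ is not a truncated $g$-torsionfree point module of length $d+1$, there is a nonzero $a\in\bigoplus_{i=0}^{d-n}P_{i}$ with $ga=0$. Write $a=\sum_{i=0}^{d-n}c_{i}m_{i}$. Because $g$ is homogeneous of degree $n$, left multiplication by $g$ sends $P_{i}$ into $P_{i+n}$. The components $g(c_{i}m_{i})$ therefore lie in distinct graded pieces, so $ga=0$ forces $c_{i}\,gm_{i}=0$ for every $i$. Since $a\neq 0$, some $c_{i_{0}}\neq 0$, and hence $gm_{i_{0}}=0$ for some $0\le i_{0}\le d-n$.

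\textbf{Step 2: propagate along the chain.} \lemref{lem-trunc-basic} gives $gm_{i-1}=0\iff gm_{i}=0$ for every $1\le i\le d-n$. These equivalences link all indices $0,1,\ldots,d-n$, so starting from $i_{0}$ and moving up and down we get $gm_{j}=0$ for all $0\le j\le d-n$.

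\textbf{Step 3: handle the top degrees.} For $d-n<j\le d$, the element $gm_{j}$ lies in $P_{j+n}$ with $j+n>d$, and $P_{j+n}=0$. So $gm_{j}=0$ automatically. Since the $m_{j}$ span $P$ and multiplication by $g$ is $k$-linear, $gP=0$.

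\textbf{Point module case.} The argument for a point module $P'=\bigoplus_{j\ge 0}k m'_{j}$ is the same. A nonzero $a\in P'$ with $ga=0$ has only finitely many nonzero homogeneous components, and the same degree argument as in Step 1 gives $gm'_{j_{0}}=0$ for some $j_{0}$. The point-module version of \lemref{lem-trunc-basic}, valid for all $j\ge 1$, then yields $gm'_{j}=0$ for every $j\ge 0$, hence $gP'=0$.

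\textbf{Main obstacle.} There is no real obstacle. The one place needing care is the index range in Step 2: the equivalences of \lemref{lem-trunc-basic} are only available for $1\le i\le d-n$, which is exactly why Definition~\ref{dfn-trunc-torfree} restricts $a$ to degrees at most $d-n$. The degrees above $d-n$ are then covered by the truncation argument of Step 3.
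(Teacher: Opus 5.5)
Your proposal is correct and takes the same approach as the paper, whose proof just says the statement follows from \lemref{lem-trunc-basic}. You make explicit the two routine steps the paper leaves implicit: homogeneity of $g$ turns a nonzero $g$-torsion element into a single $m_{i_0}$ with $gm_{i_0}=0$, and the pieces $P_j$ with $j>d-n$ are killed by $g$ automatically because of the truncation.
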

\begin{proof}
  It follows from \lemref{lem-trunc-basic}.
\end{proof}
For a finitely presented connected $\bbN$-graded $k$-algebra $A$ generated in degree $1$, Artin--Tate--Van den Bergh \cite{ATV90} provided a useful relationship between the point scheme $\Gamma(A)$ of $A$ and isomorphism classes of point modules over $A$.
\begin{prop}[{\cite[Proposition 3.9, Corollary 3.13]{ATV90}}]\label{prop-ATV-corres}
  Let $A$ be a finitely presented connected $\bbN$-graded $k$-algebra generated in degree $1$ and $\Gamma_{d}(A)$ the $d$-th truncated point scheme of $A$ \textup{(}resp. $\Gamma(A)$ the point scheme of $A$\textup{)}.
  Then, a one-to-one correspondence exists between the set of isomorphism classes of truncated point modules of length $d+1$ over $A$ \textup{(}resp. point modules over $A$\textup{)} and 
  set of closed points of $\Gamma_{d}(A)$ \textup{(}resp. $\Gamma(A)$\textup{)}.
\end{prop}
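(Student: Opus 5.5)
The plan is to build the correspondence by hand, in both directions, through the coordinates of the action of the generators; the point-module case then follows by passing to the limit. Throughout, write $V=\langle x_{0},\ldots,x_{s}\rangle_{k}$ and $I=\langle f_{1},\ldots,f_{t}\rangle$. I will identify a homogeneous element of $k\langle V\rangle_{i}$ with a multilinear function on $(V^{*})^{\times i}$, with the tensor factors matched to the factors of $(\bbP^{s})^{\times i}$ in the order fixed in the definition of $\Gamma_{i}(A)$. The order convention (whether the leftmost letter is evaluated at the first point or the last) has to be fixed once and used consistently.

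\emph{From modules to points.} Let $P=\bigoplus_{i=0}^{d}km_{i}$ be a truncated point module of length $d+1$. For $1\le i\le d$ write $x_{j}m_{i-1}=\lambda^{(i)}_{j}m_{i}$ and set $p_{i}=(\lambda^{(i)}_{0}:\cdots:\lambda^{(i)}_{s})$. Since $P=AP_{0}$ and $A$ is generated in degree $1$, we have $P_{i}=A_{1}P_{i-1}$, so the vector $\lambda^{(i)}$ is nonzero and $p_{i}\in\bbP^{s}$ is well defined. Rescaling the basis vectors $m_{i}$ rescales each vector $\lambda^{(i)}$, so the tuple $(p_{1},\ldots,p_{d})$ depends only on the isomorphism class of $P$. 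For a monomial $w=x_{a_{1}}\cdots x_{a_{d}}$ we get $w\,m_{0}=(\text{product of the }\lambda\text{'s})\,m_{d}$, and this scalar is exactly the value of $w$, as a multilinear function, at lifts of the points $p_{i}$. Every $f\in I_{d}$ acts as $0$ on $P$, so $f(p_{1},\ldots,p_{d})=0$. Hence $(p_{1},\ldots,p_{d})$ is a closed point of $\Gamma_{d}(A)$.

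\emph{From points to modules.} Given a closed point $(p_{1},\ldots,p_{d})$ of $\Gamma_{d}(A)$, choose lifts $\lambda^{(i)}\in k^{s+1}\setminus\{0\}$ and let $k\langle V\rangle$ act on $\bigoplus_{i=0}^{d}km_{i}$ by the formulas above, with $x_{j}m_{d}=0$. I must show that $I$ annihilates this module. A homogeneous $f\in I_{e}$ sends $m_{l}$ to $f(p_{l+1},\ldots,p_{l+e})\,m_{l+e}$, which is automatically $0$ when $l+e>d$. If $l+e\le d$, pad $f$ to $u f v\in I_{d}$ with $u\in V^{\otimes l}$ and $v\in V^{\otimes(d-l-e)}$; by multilinearity its value at the full tuple factors as
\[
u(p_{1},\ldots,p_{l})\, f(p_{l+1},\ldots,p_{l+e})\, v(p_{l+e+1},\ldots,p_{d}).
\]
This expression vanishes because the point lies in $\Gamma_{d}(A)$. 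Since each $\lambda^{(i)}\ne0$, I can choose monomials $u,v$ whose values are nonzero, which forces $f(p_{l+1},\ldots,p_{l+e})=0$. Thus the module is an $A$-module, it is cyclic because each $\lambda^{(i)}\neq0$, and it has the right dimensions. The two constructions are clearly mutually inverse on isomorphism classes versus points. This padding argument is the one step needing care, since it is where the scheme cut out by $I_{d}$ alone is shown to control all lower-degree relations on every window.

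\emph{Point modules.} A point module $P'$ determines compatible truncations $P'/P'_{\ge d+1}$, so it gives a sequence of points $(p_{1},p_{2},\ldots)$ whose first $d$ terms lie in $\Gamma_{d}(A)$ for every $d$. Conversely, a compatible sequence defines an action on $\bigoplus_{i\ge0}km_{i}$; every relation of $I$ is tested inside some finite truncation, so the same argument shows this is an $A$-module. Closed points of $\Gamma(A)=\varprojlim\Gamma_{i}(A)$ are exactly such compatible sequences of closed points, because $k$ is algebraically closed and the transition maps are the projections $\mathrm{pr}^{(i+1)}_{1,i}$. This gives the bijection with point modules.
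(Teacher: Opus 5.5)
Your argument is correct. It takes a different route only in the sense that the paper gives no proof of its own: it quotes \cite[Proposition 3.9, Corollary 3.13]{ATV90}. There the correspondence is set up scheme-theoretically, with $\Gamma_{d}(A)$ parametrizing truncated point modules in families. That scheme structure is what the closed-immersion and stabilization results, which the paper later borrows from \cite[Propositions 3.6, 3.7]{ATV90} in Corollary~\ref{cor-color-sch}, rely on.

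Your coordinate construction gives only the bijection on closed points. That is, however, exactly how the paper uses the proposition, since Theorem~\ref{thm-color-var} and Corollary~\ref{cor-color-sch} only compare sets $\Cl(\Gamma_{d}(-))$. Your version also has an advantage: because the tuple is read off directly from the action, the correspondence is visibly compatible with a quotient $A\to A/J$. An $A/J$-module has the same tuple whether viewed over $A$ or over $A/J$. The paper tacitly needs this to turn the bijection of Theorem~\ref{thm-color-var} into the equality $\Cl(\Gamma_{2n_{L}-1}(U(L)))=\Cl(\Gamma_{2n_{L}-1}(S_{\varepsilon}(L_{\bfone})))$. Your padding step is the right key point: vanishing of $I_{d}$ on the whole tuple forces each $I_{e}$ to vanish on every window.

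Two points need tidying.

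First, the order convention is not a free choice. For left modules, your formulas $w\,m_{0}=w(p_{1},\ldots,p_{d})\,m_{d}$ and $f\,m_{l}=f(p_{l+1},\ldots,p_{l+e})\,m_{l+e}$ require the rightmost letter of a word to be evaluated at the first point. This is also the only convention under which the tuples of the truncations $P'/P'_{\geq d+1}$ are compatible with $\mathrm{pr}^{(d+1)}_{1,d}$; with the other convention they are compatible with projection onto the last factors instead. Your padding display is written in the opposite convention. Take $u\in V^{\otimes(d-l-e)}$ and $v\in V^{\otimes l}$, so that $(ufv)\,m_{0}=u(p_{l+e+1},\ldots,p_{d})\,f(p_{l+1},\ldots,p_{l+e})\,v(p_{1},\ldots,p_{l})\,m_{d}$. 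Since $I$ is two-sided, this costs nothing.

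Second, in the last step what you actually prove is a bijection with compatible sequences of closed points, i.e.\ with $\varprojlim \Gamma_{i}(A)(k)$. Identifying this with the closed points of $\Gamma(A)$ is the intended reading of the statement. The limit is taken along non-affine maps, so it is best understood through its points. In the paper's application the system stabilizes, so this is harmless there.
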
 
\subsection{Color Lie algebras}\label{subsec-color}
In this subsection, we recall the definitions and properties of the universal enveloping algebra of a color Lie algebra.
First, we recall the definitions of a skew symmetric bicharacter and a color Lie algebra.
\begin{dfn}[\cite{Ree}]
Let $G$ be an abelian group.
A map $\varepsilon : G \times G \rightarrow k^{\times}$ is a \textit{skew symmetric bicharacter}
if, for $\alpha, \beta, \gamma \in G$, the following equalities hold\textup{:}
\begin{enumerate}
  \item[(i)] $\varepsilon (\alpha, \beta)\varepsilon (\beta, \alpha)=1$;\\
\item[(ii)] $\varepsilon (\alpha+ \beta, \gamma)=\varepsilon   (\alpha, \gamma)\varepsilon (\beta, \gamma)$;\\
\item[(iii)] $\varepsilon (\alpha, \beta + \gamma)=\varepsilon (\alpha, \beta)\varepsilon (\alpha, \gamma)$.
\end{enumerate}
\end{dfn}
\begin{dfn}[\cite{Ree}]
  Let $G$ be an abelian group and $\varepsilon: G \times G \rightarrow k^{\times}$ be a skew symmetric bicharacter.
Suppose that $L=\bigoplus_{\gamma \in G}L_{\gamma}$ is a $G$-graded $k$-vector space and $[\,\, ,\,\, ] :L\times L\rightarrow L$ is a $G$-graded bilinear form.
A \textit{$( G, \varepsilon )$-color Lie algebra} is the pair $L=(L, [\,\, ,\,\, ])$ satisfying equalities for $a, b, c\in h(L)$ as follows\textup{:}
\begin{enumerate}
  \item[(i)] $[ a,b ] = -\varepsilon(|a|, |b|) [ b,a ]$;
  \item[(ii)] $\varepsilon(|c|, |a|)[ a, [ b, c ]] 
+\varepsilon(|a|, |b|)[ b, [ c, a ]]
+\varepsilon(|b|, |c|)[ c, [ a, b ]]=0$.
\end{enumerate}
Additionally, $k$-subspaces of $L$ are defined as $L_{-}:=\bigoplus_{\gamma \in G_{-}}L_{\gamma}$
(resp. $L_{+}:=\bigoplus_{\gamma \in G_{+}}L_{\gamma}$)
with 
$G_{+}:=\{ \gamma \in G \mid \varepsilon(\gamma, \gamma)=+ 1\}$
(resp. $G_{-}:=\{ \gamma \in G \mid \varepsilon(\gamma, \gamma)=- 1\}$). 
\end{dfn}
Next, for a $(G, \varepsilon)$-color Lie algebra $L$, we recall the definition of the universal enveloping algebra of $L$ and $\varepsilon$-symmetric algebra of $L$.
\begin{dfn}[\cite{Ree}]
Let $L$ be a $(G,\varepsilon)$-color Lie algebra, $T(L)$ the tensor algebra of $L$, and $J(L)$ the two-sided ideal of $T(L)$ generated by the relations $a \otimes b -\varepsilon(|a|, |b|)b\otimes a -[a, b]$ for any $a,b \in h(L)$.
The \textit{universal enveloping algebra} of $L$ is defined as follows\textup{:}
\[U(L):=T(L)/J(L).\]
\end{dfn}
The universal enveloping algebra $U(L)$ of a $(G, \varepsilon)$-color Lie algebra $L$ has natural $G$-gradation\textup{;} 
  the degree of $x_{1}\otimes \cdots \otimes x_{m}$
  is $|x_{1}|+\cdots +|x_{m}|$, 
  where
  $x_{i} \in h(L)$
   for 
  $1 \leq i \leq m$.

Also, for a $(G, \varepsilon)$-color Lie algebra $L$,
 the \textit{$\varepsilon$-symmetric algebra} $S_{\varepsilon}(L)$ of $L$ is the quotient of $T(L)$ by the two-sided ideal generated by the relations\textup{:}
\[
a \otimes b -\varepsilon(|a|, |b|)b\otimes a \text{ for } a,b \in h(L).
\]
For more details, see \cite{Sch}. 
Note that an $\varepsilon$-symmetric algebra can be defined for a $G$-graded $k$-vector space and a skew symmetric bicharacter $\varepsilon: G\times G \rightarrow k^{\times}$.
\begin{rem}\label{rem-grading}
  \begin{enumerate}
  \item 
  \blue{Let $U(L)$ be the universal enveloping algebra $U(L)$ of a finite dimensional color Lie algebra $L$.
  If $L_{-}=0$, then $U(L)$ has a finite Gelfand--Kirillov dimension by \cite[8.1.14]{MR} and \cite[4.C]{Sch} (also, see \cite{Ree})}.
  \item
  Let $U(L)$ be the universal enveloping algebra $U(L)$ of a finite dimensional $(\bbZ^{m}, \varepsilon)$-color Lie algebra $L$.
  $U(L)$ can be regarded as $\bbZ$-graded $k$-algebra as follows\textup{:}
 \[
  U(L) = \bigoplus_{i \in \bbZ}\left(\bigoplus_{i=a_{1}+ \cdots + a_{m}} U(L)_{(a_{1},\ldots,a_{m})}\right).
 \]
 If $L_{-}=0$ and the $\bbZ$-graded $k$-algebra $U(L)$ is a connected $\bbN$-graded $k$-algebra generated in degree $1$, 
 then $U(L)$ is an AS-regular algebra by \cite[6.3 Theorem]{Lev} and \cite[Theorems 3.1, 3.2]{Price97}.\label{rem-enu-grading}
 \end{enumerate}
 \end{rem}
At the end of this subsection, for the universal enveloping algebra $U(L)$ of a $(G, \varepsilon)$-color Lie algebra $L$, we recall a $G$-graded free resolution of the trivial module $k$ of $U(L)$ called the color Koszul resolution of $U(L)$ (\cite[Theorem 3]{CPvO}).
\begin{thm}\label{thm-color-Kos}
Let $L=(V, [\, ,\, ])$ be a $(G, \varepsilon)$-color Lie algebra and $U(L)$ the universal enveloping algebra of $L$.
  For $r \geq 0$, the free module $C_{r}$ is defined as $C_{r}=U(L)\otimes_{k}\bigwedge_{\varepsilon}^{r}V$, where $\bigwedge_{\varepsilon}^{r}V$ is the $r$-th part of $\bigwedge_{\varepsilon}=T(V)/\langle u\otimes v +\varepsilon(|u|,|v|)v\otimes u \mid u,v \in h(V) \rangle$.
  Also, $d_{r}:C_{r}\rightarrow C_{r-1}$ for $r \geq 1$ is defined as follows\textup{:}
  \begin{align*}
  a\otimes & (v_{1} \wedge \cdots \wedge v_{r})
  \\
  \mapsto&
  \dsum_{i=1}^{r}(-1)^{i+1}\eta_{i}a v_{i} \otimes \left( v_{1}\wedge \cdots \wedge \widehat{v_{i}}\wedge \cdots \wedge v_{r} \right)
  \\
  \quad&
  +\dsum_{1 \leq i < j \leq r}(-1)^{i+j}\eta_{i}\eta_{j}\varepsilon(|v_{j}|, |v_{i}|)a \otimes \left( [v_{i}, v_{j}]\wedge v_{1} \wedge \cdots\wedge \widehat{v_{i}} \wedge \cdots \wedge \widehat{v_{j}}\wedge \cdots \wedge v_{r}  \right)
  \end{align*}
  where $v_{i} \in h(L)$, $a \in h(U(L))$, $\eta_{1}=1$, and $\eta_{i}=\Pi_{l=1}^{i-1}\varepsilon(|v_{l}|,|v_{i}|)$ for $i \geq 2$.
  This symbol $\,\widehat{}\,$ indicates that the element below it must be omitted.
  Then, the sequence
  \[
     \cdots \rightarrow C_{i} \xrightarrow{d_{i}} C_{i-1} \rightarrow \cdots \rightarrow C_{2} \xrightarrow{d_{2}} C_{1} \xrightarrow{d_{1}} C_{0} \rightarrow k \rightarrow 0
  \]
  is a $G$-graded free resolution of the $G$-graded $U(L)$-module $k$.
\end{thm}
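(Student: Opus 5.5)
The plan follows the classical proof of exactness of the Chevalley--Eilenberg (Koszul) resolution for $U(\mathfrak g)$: first check that the maps $d_r$ are well defined and form a complex, then prove exactness by a filtration argument that reduces to the $\varepsilon$-symmetric algebra $S_\varepsilon(V)$. A known color Poincar\'e--Birkhoff--Witt theorem will be assumed; its proof via Scheunert's cocycle twist \cite{Sch} is outlined in the second paragraph.

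\textbf{Well-definedness and $d^2=0$.} First, check that the displayed formula for $d_r$ respects the defining relations $u\wedge v=-\varepsilon(|u|,|v|)v\wedge u$ of $\bigwedge_\varepsilon V$. It suffices to swap adjacent factors $v_i,v_{i+1}$. In the first sum the signs $\eta_i$ are designed to absorb this: $\eta_i$ is the color sign for moving $v_i$ past $v_1,\dots,v_{i-1}$ to the front. In the second sum one uses the color antisymmetry $[a,b]=-\varepsilon(|a|,|b|)[b,a]$. Each $d_r$ is then visibly $U(L)$-linear and $G$-homogeneous of degree $0$. Next verify $d_{r-1}d_r=0$, which splits into three types of terms. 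Terms with two letters $v_iv_j$ moved into $U(L)$ combine, using $v_iv_j-\varepsilon(|v_i|,|v_j|)v_jv_i=[v_i,v_j]$ in $U(L)$, to cancel the mixed terms with one letter moved and one bracket formed. Terms with two brackets cancel in pairs by antisymmetry, and double brackets $[[v_i,v_j],v_l]$ cancel by the color Jacobi identity (ii). Finally, $d_1(a\otimes v)=av$, so the augmentation $C_0=U(L)\to k$ composed with $d_1$ is zero, and $\operatorname{coker} d_1=U(L)/U(L)V=k$. This bookkeeping is routine but lengthy.

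\textbf{Exactness by filtration.} Give $U(L)$ its standard filtration $F_pU(L)$, the image of $\bigoplus_{i\le p}V^{\otimes i}$. Filter the complex by
\[
F_pC_r=F_{p-r}U(L)\otimes \textstyle\bigwedge_\varepsilon^r V .
\]
The map $d_r$ preserves this filtration: the first sum raises the $U(L)$-degree by one while lowering $r$ by one, and the bracket sum keeps the $U(L)$-degree while lowering $r$ by one, so it drops filtration level by one. Hence on the associated graded complex only the first sum survives. By the color PBW theorem, $\operatorname{gr}U(L)\cong S_\varepsilon(V)$, so $\operatorname{gr}C$ is the $\varepsilon$-Koszul complex
\[
S_\varepsilon(V)\otimes\textstyle\bigwedge^\bullet_\varepsilon V\to k .
\]
Exactness of this complex is the main obstacle. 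If $L_-=0$ it is the Koszul complex of a quantum-type polynomial algebra. In general I would choose a homogeneous basis and write the complex as a tensor product of one-variable complexes. For $v\in L_+$ the factor is $k[v]\otimes(k\oplus kv)$, the usual two-term resolution. For $v\in L_-$ it is $\bigwedge(v)\otimes\Gamma(v)$, which is exact as in the super case. The tensor products must be formed with color signs, braided by $\varepsilon$; exactness is preserved by the K\"unneth formula over $k$. Alternatively, Scheunert's twist converts $\varepsilon$ to a super sign and the symmetric algebra to an ordinary super-symmetric algebra, where exactness is classical; this same twist also proves the color PBW theorem assumed above. Since the filtration is exhaustive and bounded below in each homological degree ($F_pC_r=0$ for $p<r$), a standard spectral sequence (or direct induction on $p$) argument lifts exactness from $\operatorname{gr}C$ to $C$. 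This gives the claimed $G$-graded free resolution.
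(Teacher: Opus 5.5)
The paper does not prove this theorem itself. It quotes it from \cite{CPvO} (Theorem~3 there) and uses it only as a black box in the proof of \corref{cor-color-sch}. Your proposal replaces that citation with the classical Chevalley--Eilenberg argument, and it holds up.

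The coefficients in $d_r$ are exactly the $\bigwedge_\varepsilon$-signs for two moves. Moving $v_i$ to the front gives $(-1)^{i-1}\eta_i$. Moving the pair $v_i,v_j$ to the front gives $(-1)^{i+j-1}\eta_i\eta_j\varepsilon(|v_j|,|v_i|)$, and this is followed by the $r=2$ rule $v_i\wedge v_j\mapsto -[v_i,v_j]$. This is what makes your adjacent-swap check and the three-way cancellation in $d_{r-1}d_r=0$ go through. The filtration $F_pC_r=F_{p-r}U(L)\otimes\bigwedge^r_\varepsilon V$ does kill the bracket terms in $\operatorname{gr}$. Since $F_{-1}C=0$ and the filtration is exhaustive, acyclicity lifts from $\operatorname{gr}C$ to $C$.

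Two points of precision. First, for $v\in L_-$ the factor $\bigwedge_\varepsilon(kv)$, as defined in the statement, is the quotient $T(kv)=k[v]$, not a divided power algebra. The induced differential is $1\otimes v^r\mapsto r\,v\otimes v^{r-1}$, so exactness of that one-variable complex uses $r+1\neq 0$. Together with $v^2=\tfrac12[v,v]$ in color PBW, this is where $\operatorname{char}k=0$ genuinely enters. Second, Scheunert's twist is usually stated for finitely generated $G$. For general $G$, apply it to $L_{G'}=\bigoplus_{\gamma\in G'}L_\gamma$ for finitely generated subgroups $G'\subseteq G$ and pass to the directed colimit. The same colimit handles infinitely many basis vectors in your K\"unneth argument, which does not need the twist at all.

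As for what each route buys: the citation costs the paper nothing and covers arbitrary $(G,\varepsilon)$. Your route is self-contained and isolates the only two inputs, namely color PBW and exactness of the $\varepsilon$-Koszul complex of $S_\varepsilon(V)$. In the paper's actual setting ($G=\bbZ^{m+1}$, $\dim L<\infty$, $L_-=0$), the latter is just the Koszul complex of a skew polynomial ring, so none of the odd-variable subtleties arise.
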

\section{$q'$-Heisenberg normal elements}\label{sec-gen}
In this section, for a $\bbZ$-graded $k$-algebra $A$, we define a \textit{$q'$-Heisenberg normal element} of $A$ and prove some of its basic properties.  
\begin{dfn}\label{dfn-one}
Let $A$ be a $\bbZ$-graded $k$-algebra.
A regular homogeneous normal element $g$ of $A$ with degree $n \geq 1$ is called a \textit{$q'$-Heisenberg normal element} of $A$ if there exist $u \in k^{\times}$, $x \in A_{1}$, and $y \in A_{n-1}$ that satisfy the following equalities\textup{:}
\begin{enumerate}
  \item[(i)] $g=xy-uyx$;
  \item[(ii)] $0=xg-ugx$;
  \item[(iii)] $0=gy-uyg$.
\end{enumerate}
\end{dfn}
\begin{rem}
  If either $x$ or $y$ is a central element of $A$, then $g=0$ holds.
  In particular, when $A$ is commutative, $A$ has no $q'$-Heisenberg normal element.
\end{rem}
For $\bbZ$-graded $k$-algebra $A$, in certain situations, a $q'$-Heisenberg normal element of $A$ is not necessarily regular, but $u \in k^{\times}$ is sufficient.
For convenience, we assume the regularity of a $q'$-Heisenberg normal element.
We show some examples of a $q'$-Heisenberg normal element of some algebras.
\begin{ex}
  \begin{enumerate}
    \item
    For $\alpha, \beta \in k$, suppose that $A(\alpha, \beta)$ is a graded down-up algebra
    \[
    A(\alpha, \beta):=k\langle x,y \rangle/\langle x^{2}y-\alpha xyx-\beta yx^{2}, xy^{2}-\alpha yxy-\beta y^{2}x\rangle
    \]
    (see \cite{IU}).
    If $\alpha, \beta \neq 0$ and $\alpha^{2}+4\beta=0$ hold, then $xy-\dfrac{\alpha}{2}yx$ is a $q'$-Heisenberg normal element of $A(\alpha, \beta)$.
    This algebra is isomorphic to another $q$-analog of the universal enveloping algebra of the Heisenberg algebra $H'_{q}$ with $q=\dfrac{\alpha}{2}$ as in \cite[Proposition 2.4]{KS}.
    \item 
    Another example of a $q'$-Heisenberg normal element can be found in \cite[Theorem A (d)]{LPWZ}.
    For $v,p \in k$ and $p \neq 0$, let $D(v,p)$ be the algebra defined as follows\textup{:}
    \[
    D(v,p) := k\langle x,y \rangle\left/ \middle\langle \begin{aligned}
      &xy^{2} + vyxy + p^{2}y^{2}x,\\
      &x^{3}y + (v + p)x^{2}yx + (p^{2} +pv)xyx^{2} +p^{3}yx^{3}
    \end{aligned}
    \right\rangle.
    \]
    If $v=2p$ holds, then an element $x^{2}y+2pxyx+p^{2}yx^{2}$ is a $q'$-Heisenberg normal element of $D(v, p)$.
    \item
    More generally, for an $\bbN$-graded $k$-algebra $A$, let $f$ be a homogeneous element with degree $n$ and $\delta$ a graded derivation of $A$.
    Suppose that $B=A[X, \delta]$ is a graded Ore extension of $A$ with a degree $1$ element $X$.
    If $Y=\delta(f)$ is regular in $B[Y]/\langle Y-\delta(f) \rangle$, where $Y$ is a degree $n+1$ element, then $\delta(f)$ is a $q'$-Heisenberg normal element of it.
    Indeed, the definition of the Ore extension gives the following equalities\textup{:}
    \begin{align*}
     \begin{cases}
      Xf=fX+\delta(f) &\text{ in } A[X, \delta];\\
      Yr=rY &\text{ for } r \in B[Y] \text{ in } B[Y]=A[X, \delta][Y].
     \end{cases}
    \end{align*}
  \end{enumerate}
\end{ex}
We will show some basic properties of a $q'$-Heisenberg normal element.
First, we recall a basic property of a normal element.
\begin{lem}\label{lem-normal}
  Let $A$ be a $\bbZ$-graded $k$-algebra, $g$ a regular homogeneous normal element of $A$ with degree $n \geq 1$, and $A^{[n]}$ the $n$-th quasi-Veronese algebra of $A$. 
  Suppose that $\bfg$ is the element of $A^{[n]}$ defined as the following equality\textup{:}
  \[
  \bfg :=
  (\delta_{i,j} g)
  =
  \left(
    \begin{array}{cccc}
      g& & & \\
       &g& & \\
       & &\ddots&\\
       &&&g\\
    \end{array}
  \right)
  \in
  \left(
    \begin{array}{cccc}
      A_{n}&A_{n+1}&\cdots&A_{2n-1}\\
      A_{n-1}&A_{n}& &\vdots\\
      \vdots& &\ddots&\vdots\\
      A_{1}&\cdots&\cdots&A_{n}\\
    \end{array}
  \right)
  =(A^{[n]})_{1},
  \]
  where $\delta_{i,j}$ is the Kronecker delta. 
  Then, $\bfg$ is a regular homogeneous normal element of $A^{[n]}$ with degree $1$.
\end{lem}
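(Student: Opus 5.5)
We verify the three claims about $\bfg$ (homogeneous of degree $1$, normal, regular) directly from the multiplication rule of $A^{[n]}$. First fix the indexing: with the convention of the definition, the $(i,j)$-entry of an element of $(A^{[n]})_{p}$ lies in $A_{pn+j-i}$ (rows and columns indexed by $0,\dots,n-1$). The diagonal entries of degree-$1$ elements lie in $A_{n}$. Since $g\in A_{n}$, the matrix $\bfg=(\delta_{i,j}g)$ is a well-defined element of $(A^{[n]})_{1}$, and it is homogeneous.

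\textbf{Normality.} Let $\sigma$ be the graded automorphism of $A$ determined by $ga=\sigma(a)g$ for homogeneous $a$. It exists because $g$ is normal and regular: $gA=Ag$ gives such an element $\sigma(a)$; it is unique since $g$ is right regular; and it preserves degree because $g$ is homogeneous. Moreover $\sigma$ is bijective, since $Ag=gA$ holds in both directions.

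For $(a_{i,j})\in(A^{[n]})_{p}$, write the product with $\bfg$ using the rule $(a_{i,j})(b_{i,j})=(\sum_{l}a_{l,j}b_{i,l})$. The product $\bfg\cdot(a_{i,j})$ has entries $g a_{i,j}$ (up to the index transpose built into the convention), while $(a_{i,j})\cdot\bfg$ has entries $a_{i,j}g$. Hence
\[
\bfg\,(a_{i,j})=(\sigma(a_{i,j}))\,\bfg ,
\]
where $(\sigma(a_{i,j}))$ is again an element of $(A^{[n]})_{p}$, because $\sigma$ preserves degree. Since $\sigma$ is surjective, this yields $\bfg A^{[n]}=A^{[n]}\bfg$ on homogeneous elements, and therefore everywhere.

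\textbf{Regularity.} If $\bfg\,(a_{i,j})=0$, then every entry satisfies $g a_{i,j}=0$, so $a_{i,j}=0$ because $g$ is regular. The same argument works for $(a_{i,j})\,\bfg=0$. Since $\bfg$ is homogeneous, it suffices to check this on homogeneous elements: a product with a non-homogeneous element splits degreewise into products with its homogeneous components.

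\textbf{Main obstacle.} The only delicate point is bookkeeping with the index convention of the multiplication $\sum_{l}a_{l,j}b_{i,l}$. One has to confirm that multiplying by a scalar-diagonal matrix multiplies each entry by $g$ on the correct side, without permuting entries into mismatched degrees. We would check this first by writing out both products explicitly. Everything else is immediate from the regularity and normality of $g$.
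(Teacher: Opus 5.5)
Your proof is correct and follows the paper's argument: the paper likewise introduces $\nu_{g}$ with $\nu_{g}(a)g=ga$ and computes $\bfg\mathbf{a}=(\nu_{g}(a_{i,j}))\bfg$ entrywise, leaving regularity implicit where you spell it out. The index check you flag is immediate and involves no transpose: $(\bfg\mathbf{a})_{i,j}=\sum_{l}\delta_{l,j}\,g\,a_{i,l}=g\,a_{i,j}$ and $(\mathbf{a}\bfg)_{i,j}=\sum_{l}a_{l,j}\,\delta_{i,l}\,g=a_{i,j}\,g$.
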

\begin{proof}
  Let $\nu_{g}$ be the graded automorphism determined by $\nu_{g}(a)g=ga$ for $a \in A$.
  Also, the map defined by $\nu_{\bfg}^{[n]}((a_{i,j}))=(\nu_{g}(a_{i,j}))$ for $(a_{i,j}) \in (A^{[n]})_{r}$ is a graded automorphism of $A^{[n]}$ in \cite{Mori}.
  For $\mathbf{a}=(a_{i,j}) \in (A^{[n]})_{r}$, the following equality holds\textup{:}
  \[
  \mathbf{ga}
  =\left( \sum _{l=0}^{n-1}\delta_{l,j}g a_{i,l} \right)
  =(ga_{i,j})
  =(\nu_{g}(a_{i,j})g)
  =\left( \sum _{l=0}^{n-1}\nu_{g}(a_{l,j})\delta_{i,l}g \right)
  =\nu_{\bfg}^{[n]}(\mathbf{a})\mathbf{g}.
  \] 
  Then, $\bfg$ is a regular normal element in $A^{[n]}$ with degree $1$.
\end{proof}
 Note that, by \cite{ATV91} and \cite[Example 2.12]{Zhang}, the set $\nu_{\bfg}^{[n]}=\{(\nu_{\bfg}^{[n]})^{i}\mid i \in \bbZ \}$ is a twisting system of $A^{[n]}$, and $\bfg$ is a central element of the twisted algebra ${}^{\nu_{\bfg}^{[n]}}A^{[n]}$.
\begin{rem}
  In the remainder of this section, we use the following notations.
  For a $\bbZ$-graded $k$-algebra $A$, let $g$ be a regular homogeneous normal element of $A$ with degree $n \geq 1$.
 We denote an element $\bfg:=(\delta_{i,j}g)$ of $A^{[n]}$ as $\bfg$, where $\delta_{i,j}$ is the Kronecker delta.
 Also, let $\nu_{g}$ denote the graded automorphism of $A$ defined by $\nu_{g}(a)g=ga$ for $a \in A$.
Let $\nu_{\bfg}^{[n]}$ denote the graded automorphism of $A^{[n]}$ defined by $\nu_{\bfg}^{[n]}((a_{i,j}))=(\nu_{g}(a_{i,j}))$ for $(a_{i,j}) \in (A^{[n]})_{r}$.
\end{rem}

We consider a relationship between an algebra having a $q'$-Heisenberg normal element and the first Weyl algebra. 
The \textit{first Weyl algebra} $W_{1}(k)$ is defined by
\[
W_{1}(k):=k\langle X, Y \rangle/\langle XY-YX-1 \rangle
\]
(see \cite{Co}).
Also, a filtration $\mathcal{B}=\{\mathcal{B}_{i}\}$ on $W_{1}(k)$ is called the \textit{Bernstein filtration} defined by
  \[
  \mathcal{B}_{i}:=
  \begin{cases}
  \left\{\dsum_{|\alpha|+|\beta|\leq i} c_{\alpha,\beta}Y^{\alpha}X^{\beta} \in W_{1}(k) \middle| c_{\alpha,\beta} \in k \right\} &\text{for} \, i\geq 0;\\
  0 &\text{for}\, i<0;
  \end{cases}
  \]  
  (see \cite{Co}).
\begin{prop}\label{prop-Weyl-hom}
  Let $A$ be a $\bbZ$-graded $k$-algebra, $g=xy-uyx$ be a $q'$-Heisenberg normal element of $A$ with degree $n$, and ${}^{\nu_{\bfg}^{[n]}}(A^{[n]})$ be the twisted algebra of the $n$-th quasi-Veronese algebra of $A$ by $\nu_{\bfg}^{[n]}$.
  Assume that $\mathcal{R}$ is the dehomogenization of ${}^{\nu_{\bfg}^{[n]}}(A^{[n]})$ with respect to $\bfg$ and $F\mathcal{R}=\{F_{i}\mathcal{R}\}_{i \in \bbZ}$ is the filtration of $\mathcal{R}$ defined by the dehomogenization.
  Then, there is a homomorphism $W_{1}(k)\rightarrow \mathcal{R}$ such that it preserves the filtrations $\mathcal{B}$ and $F\mathcal{R}$.
\end{prop}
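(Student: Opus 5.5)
The plan is to define the homomorphism on generators, $X\mapsto \overline{\mathbf{X}}$ and $Y\mapsto \overline{\mathbf{Y}}$. Here $\mathbf{X},\mathbf{Y}$ are homogeneous elements of degree at most $1$ in ${}^{\nu_{\bfg}^{[n]}}(A^{[n]})$, built from $x$ and $y$, and bars denote images in $\mathcal{R}$. It then suffices to check the single relation $\overline{\mathbf{X}}\,\overline{\mathbf{Y}}-\overline{\mathbf{Y}}\,\overline{\mathbf{X}}=1$ in $\mathcal{R}$. If this holds, the universal property of $W_{1}(k)=k\langle X,Y\rangle/\langle XY-YX-1\rangle$ gives the homomorphism. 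Since $X,Y\in\mathcal{B}_{1}$ map into $F_{1}\mathcal{R}$ and both filtrations are multiplicative, we get $\mathcal{B}_{i}\mapsto F_{i}\mathcal{R}$ for all $i$.

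First I would record how the twisting acts. Condition (ii) gives $gx=u^{-1}xg$, so $\nu_{g}(x)=u^{-1}x$. Condition (iii) gives $gy=uyg$, so $\nu_{g}(y)=uy$. Consequently $\nu_{\bfg}^{[n]}$ rescales any matrix whose only nonzero entries are $x$ (resp. $y$) by $u^{-1}$ (resp. $u$). Hence the twisted product $\mathbf{a}\circ\mathbf{b}=(\nu_{\bfg}^{[n]})^{|\mathbf{b}|}(\mathbf{a})\mathbf{b}$ of such matrices is the ordinary matrix product times an explicit power of $u$. The idea is to choose the degrees of $\mathbf{X}$ and $\mathbf{Y}$ so that these powers of $u$ exactly compensate the $u$ in $g=xy-uyx$. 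The twisted commutator should then become $xy-uyx=g$ placed on the diagonal, i.e. $\bfg$ (or a power of it). Since $\bfg$ is central of degree $1$ in the twisted algebra (remark after \lemref{lem-normal}) and $1-\bfg$ is killed, $\bfg\mapsto 1$ in $\mathcal{R}$.

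Concretely, I would place $x$ and $y$ in the positions of $A^{[n]}$ whose degrees add up correctly. Entries of $(A^{[n]})_{r}$ lie in $A_{rn+j-i}$, so $x\in A_{1}$ and $y\in A_{n-1}$ sit on a fixed off-diagonal band in degree $0$ or $1$. I expect to have to pad with powers of $g$ or shift-matrices so that the products $\mathbf{X}\circ\mathbf{Y}$ and $\mathbf{Y}\circ\mathbf{X}$ land on the diagonal with entries $xy$ and $yx$ respectively. A natural first attempt is to take $\mathbf{X}$ of degree $0$ carrying $x$ on the band $j-i=1$, and $\mathbf{Y}$ of degree $1$ carrying $y$ on the band $j-i=-1$, so that the products have diagonal entries in $A_{n}$, i.e. lie in degree $1$. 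If this works, the commutator equals $\bfg$ up to the twist scalar $u^{\pm1}$ coming from $\nu_{\bfg}^{[n]}$.

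The main obstacle is the matrix bookkeeping, because cyclic/wrap-around entries may prevent a single band from working. In that case I would distribute the remaining entries using $g$ (e.g. entries $yg$ or $gx$ in the corner) and use (ii)--(iii) to commute $g$ past $x$ and $y$. The goal is that every diagonal entry of the commutator becomes exactly $g$, or $g^{2}$ if both $\mathbf{X},\mathbf{Y}$ have degree $1$; in the latter case the relation still becomes $1$ in $\mathcal{R}$. I would verify this entrywise, keeping track of the scalar twists.
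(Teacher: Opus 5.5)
Your framework matches the paper's. You send $X,Y$ to images of elements of degree $\le 1$ built from $x,y$, convert twisted products into ordinary ones via $\nu_{g}(x)=u^{-1}x$ and $\nu_{g}(y)=uy$, use $\bfg\mapsto 1$ in $\mathcal{R}$, and get filtration compatibility from $X,Y\mapsto F_{1}\mathcal{R}$. However, the whole content of the proof is the explicit choice of the two matrices, and you never supply one that works. The one concrete candidate you give fails.

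Index rows and columns by $0,\dots,n-1$, so slot $(i,j)$ of $(A^{[n]})_{r}$ is $A_{rn+j-i}$. Take $\mathbf{X}\in(A^{[n]})_{0}$ with $x$ at $(i,i+1)$ and $\mathbf{Y}\in(A^{[n]})_{1}$ with $y$ at $(i+1,i)$. Then the last row of $\mathbf{X}$ and the first row of $\mathbf{Y}$ are zero. So $\mathbf{X}\mathbf{Y}$ and $\mathbf{Y}\mathbf{X}$ are diagonal with entries $(xy,\dots,xy,0)$ and $(0,yx,\dots,yx)$. Hence $\mathbf{X}\circ\mathbf{Y}-\mathbf{Y}\circ\mathbf{X}=u^{-1}\mathbf{X}\mathbf{Y}-\mathbf{Y}\mathbf{X}$ has diagonal $(u^{-1}xy,u^{-1}g,\dots,u^{-1}g,-yx)$. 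This element is homogeneous of degree $1$, and $\langle\bfone-\bfg\rangle$ meets each homogeneous component trivially. So it maps to $1$ in $\mathcal{R}$ only if it equals $\bfg$ exactly, which in general it does not. For $n=1$ the band is empty and $\mathbf{X}=0$.

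Your proposed repair also cannot be carried out within your degree constraint. In degree $0$ the wrap-around slot $(n-1,0)$ of $\mathbf{X}$ is $A_{1-n}$, so nothing built from $x$ fits there. In degree $1$ that slot is $A_{1}$, so a corner entry $gx$ does not fit either.

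The missing idea is where the $g$'s go. Put both elements in degree $1$:
\begin{itemize}
\item $\mathbf{X}$ has $xg$ at $(i,i+1)$ for $0\le i\le n-2$ and the bare $x$ at $(n-1,0)$;
\item $\mathbf{Y}$ has $y$ at $(i+1,i)$ and $gy$ at $(0,n-1)$.
\end{itemize}
Then $\nu_{\bfg}^{[n]}(\mathbf{X})=u^{-1}\mathbf{X}$ and $\nu_{\bfg}^{[n]}(\mathbf{Y})=u\mathbf{Y}$. The ordinary product $\mathbf{X}\mathbf{Y}$ is diagonal with all entries $xgy=ugxy$. The product $\mathbf{Y}\mathbf{X}$ is diagonal with entries $yxg$ and one entry $gyx$, and $yxg=gyx$ by (ii) and (iii). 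Therefore
\[
\mathbf{X}\circ\mathbf{Y}-\mathbf{Y}\circ\mathbf{X}=\bfg\bigl(\mathrm{diag}(xy)-u\,\mathrm{diag}(yx)\bigr)=\bfg^{2},
\]
which is $\bfone$ modulo $\langle\bfone-\bfg\rangle$. This is exactly the paper's construction, and it also covers $n=1$, where $\mathbf{X}=x$ and $\mathbf{Y}=gy$. Without this explicit choice and its verification, your proposal is a strategy rather than a proof.
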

\begin{proof}
  Let $\varphi : k\langle X,Y \rangle\rightarrow \mathcal{R}$ be the $k$-algebra homomorphism defined by
  \begin{align*}
  &X\mapsto
  \left(
  \begin{array}{cccc}
    &xg&&\\
    &&\ddots&\\
    &&&xg\\
    x&&&\\
  \end{array}
  \right)
  +\langle \bfone-\bfg \rangle
  \in
  (A^{[n]})_{1}
  +\langle \bfone-\bfg \rangle
  =F_{1}\mathcal{R}
  ,\\
  &Y \mapsto
  \left(
  \begin{array}{cccc}
    &&&gy\\
    y&&&\\
    &\ddots&&\\
    &&y&\\
  \end{array}
  \right)
  +\langle \bfone-\bfg \rangle
  \in
  (A^{[n]})_{1}
  +\langle \bfone-\bfg \rangle
  =F_{1}\mathcal{R}
  ,
  \end{align*}
  where $\bfone$ is the identity of $A^{[n]}$.
We will show that $\varphi(XY-YX-1)=0$.
Since $\nu_{g}(x)=u^{-1}x$, $\nu_{g}(y)=uy$, and $\nu_{g}(g)=g$, the following equalities are obtained\textup{:}
\allowdisplaybreaks[4]
\begin{align*}
  \varphi(X)\varphi(Y)&=
  \left(
  \begin{array}{cccc}
    &xg&&\\
    &&\ddots&\\
    &&&xg\\
    x&&&\\
  \end{array}
  \right)
  \circ
  \left(
  \begin{array}{cccc}
    &&&gy\\
    y&&&\\
    &\ddots&&\\
    &&y&\\
  \end{array}
  \right)
  +\langle \bfone-\bfg \rangle
  \\
  &=
  u^{-1}
  \left(
  \begin{array}{cccc}
    &xg&&\\
    &&\ddots&\\
    &&&xg\\
    x&&&\\
  \end{array}
  \right)
  \left(
  \begin{array}{cccc}
    &&&gy\\
    y&&&\\
    &\ddots&&\\
    &&y&\\
  \end{array}
  \right)
  +\langle \bfone-\bfg \rangle
  \\
  &=
  u^{-1}
  \left(
  \begin{array}{cccc}
    xgy&&&\\
    &xgy&&\\
    &&\ddots&\\
    &&&xgy\\
  \end{array}
  \right)
  +\langle \bfone-\bfg \rangle
  \\
  &=
  \bfg
  \left(
  \begin{array}{cccc}
    xy&&&\\
    &xy&&\\
    &&\ddots&\\
    &&&xy\\
  \end{array}
  \right)
  +\langle \bfone-\bfg \rangle
\end{align*}
and
\begin{align*}
\varphi(Y)\varphi(X)
  &=
  \left(
  \begin{array}{cccc}
    &&&gy\\
    y&&&\\
    &\ddots&&\\
    &&y&\\
  \end{array}
  \right)
  \circ
  \left(
  \begin{array}{cccc}
    &xg&&\\
    &&\ddots&\\
    &&&xg\\
    x&&&\\
  \end{array}
  \right)
  +\langle \bfone-\bfg \rangle
  \\
  &=
  u
  \left(
  \begin{array}{cccc}
    &&&gy\\
    y&&&\\
    &\ddots&&\\
    &&y&\\
  \end{array}
  \right)
  \left(
  \begin{array}{cccc}
    &xg&&\\
    &&\ddots&\\
    &&&xg\\
    x&&&\\
  \end{array}
  \right)
  +\langle \bfone-\bfg \rangle
  \\
  &=
  u
  \left(
  \begin{array}{cccc}
    yxg&&&\\
    &\ddots&&\\
    &&yxg&\\
    &&&gyx\\
  \end{array}
  \right)
  +\langle \bfone-\bfg \rangle
  \\
  &=
  u\bfg
  \left(
  \begin{array}{cccc}
    yx&&&\\
    &\ddots&&\\
    &&yx&\\
    &&&yx\\
  \end{array}
  \right)
  +\langle \bfone-\bfg \rangle.
\end{align*}
Consequently, the following equality is obtained\textup{:}
\begin{align*}
&\varphi(XY-YX-1)=\varphi(X)\varphi(Y)-\varphi(Y)\varphi(X)-\bfone\\
&\quad=
  \bfg
  \left(
  \begin{array}{cccc}
    xy&&&\\
    &\ddots&&\\
    &&xy&\\
    &&&xy\\
  \end{array}
  \right)
  -u\bfg
  \left(
  \begin{array}{cccc}
    yx&&&\\
    &yx&&\\
    &&\ddots&\\
    &&&yx\\
  \end{array}
  \right)
  -\bfone
  +\langle \bfone-\bfg \rangle
  =0.
\end{align*}
\allowdisplaybreaks[0]
Therefore, $\varphi$ factors through $W_{1}(k)$.
Obviously, the induced homomorphism preserves the filtrations $\mathcal{B}$ and $F\mathcal{R}$.
\end{proof}
Since the first Weyl algebra $W_{1}(k)$ is simple (see \cite[2.2.1 Theorem]{Co}), $\mathcal{R}$ has a subalgebra which is isomorphic to $W_{1}(k)$ by \propref{prop-Weyl-hom}.
\begin{prop}\label{prop-fat-point}
  Let $A$ be a finitely presented connected $\bbN$-graded $k$-algebra generated in degree $1$, and $g=xy-uyx$ be a $q'$-Heisenberg normal element of $A$ with degree $n$. 
  Then, a one-to-one correspondence exists between the set of isomorphism classes of base-point modules over $A$ and that of $A/\langle g \rangle$.
\end{prop}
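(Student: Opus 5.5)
The plan is to show that every base-point module $M$ over $A$ satisfies $gM=0$. Then base-point modules over $A$ are exactly the base-point modules over $A/\langle g\rangle$ regarded as $A$-modules, since an $A/\langle g\rangle$-module pulled back along $A\to A/\langle g\rangle$ keeps the same Hilbert series, generation in degree $0$, and GK-critical behaviour. The correspondence is then induced by restriction of scalars, which is fully faithful, so isomorphism classes match.

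\emph{Step 1: reduction to the $g$-torsionfree case.} Let $M$ be a base-point module. The annihilator $\{m\in M \mid gm=0\}$ is a graded submodule, because $g$ is normal. If it is nonzero, then $M/\{m \mid gm=0\}$ has GK-dimension $0$, since $M$ is $1$-critical. So this quotient is finite dimensional, and $M/\{m\mid gm=0\}$ vanishes in high degrees. Since $M=AM_0$, I would then argue, as in \lemref{lem-trunc-basic}, that $gM=0$. Concretely, $gM_i\cong$ a subquotient of dimension at most $c$, and $gM$ is a submodule of $M(n)$ which is finite dimensional. It is therefore a finite-dimensional submodule of a $1$-critical module, hence $0$, because a $1$-critical module of GK-dimension $1$ has no nonzero finite-dimensional submodules. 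It thus suffices to exclude $g$-torsionfree base-point modules.

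\emph{Step 2: passing to a filtered module over $\mathcal{R}$.} Suppose $M$ is $g$-torsionfree. Apply the quasi-Veronese functor $Q$ of \lemref{lem-quasi-Vero} and then the twist of \propref{prop-Zhang-twi} by $\nu_{\bfg}^{[n]}$. This gives a graded module $N$ over ${}^{\nu_{\bfg}^{[n]}}A^{[n]}$ on which the central degree-one element $\bfg$ (\lemref{lem-normal}) acts injectively. Each graded piece $N_i$ has dimension $nc$ for $i\ge 0$ and is $0$ for $i$ negative. Dehomogenizing with respect to $\bfg$ then yields a filtered $\mathcal{R}$-module $\mathcal{N}=N/(1-\bfg)N$. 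By the dehomogenization lemma, $F_i\mathcal{N}\cong N_i$, so $\dim_k F_i\mathcal{N}\le nc$ for all $i$. Because $\bfg$ is injective, the filtration is increasing, with $F_i\mathcal N\cong N_i$ embedded via $\bfg$. Hence $\mathcal{N}=\bigcup_i F_i\mathcal{N}$ is a nonzero $\mathcal{R}$-module of finite $k$-dimension, at most $nc$.

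\emph{Step 3: contradiction via the Weyl algebra.} By \propref{prop-Weyl-hom} and the simplicity of $W_1(k)$, $\mathcal{R}$ contains a copy of $W_1(k)$, and the unit maps to the unit. Restricting, $\mathcal{N}$ becomes a nonzero finite-dimensional $W_1(k)$-module. In characteristic zero this is impossible: taking traces of $XY-YX=1$ on $\mathcal{N}$ gives $0=\dim_k\mathcal{N}$. Therefore no $g$-torsionfree base-point module exists, which completes the argument.

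The main obstacle is Step 2, specifically checking that the direct limit under multiplication by $\bfg$ stabilizes. Since $\dim N_i$ is constant and $\bfg\colon N_i\to N_{i+1}$ is injective, it is an isomorphism, so the union is indeed finite dimensional and nonzero. Care is needed to identify $N/(1-\bfg)N$ with this direct limit, which is the content of the dehomogenization equivalence.
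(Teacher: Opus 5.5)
Your proof is correct, but it reaches the contradiction by a different and more elementary route than the paper.

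Both arguments apply $Q$, twist by $\nu_{\bfg}^{[n]}$, and dehomogenize with respect to $\bfg$. Both then rest on the same fact: the exhaustive filtration $F\mathcal{N}$ satisfies $\dim_k F_s\mathcal{N}\le cn$ for all $s$. The arguments diverge after that.

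\begin{itemize}
\item The paper uses holonomic module theory. From this bound and \cite[10.3.1]{Co} it deduces that $\mathcal{N}$ is a holonomic $W_1(k)$-module. It takes a good filtration $\Gamma$ with $\dim_k\Gamma_s=c_1s+c_0$, $c_1\neq 0$, and compares it with $F\mathcal{N}$ via \cite[8.3.2]{Co}. This gives $cn\ge c_1s+c_0$ for $s\gg 0$; in effect it invokes Bernstein's inequality for $W_1(k)$.
\item You observe directly that $\mathcal{N}=\bigcup_s F_s\mathcal{N}$ is nonzero and finite dimensional. You then rule it out by taking the trace of $XY-YX=1$. This needs only a unital homomorphism $W_1(k)\to\mathcal{R}$, so simplicity of $W_1(k)$ is not used, together with characteristic zero.
\end{itemize}

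Your Step 1 is also self-contained where the paper cites \cite[Lemma 5]{cCV}. You show that if $M$ is not $g$-torsionfree then $gM$ is a finite-dimensional graded submodule (isomorphic up to shift to $M/\{m\mid gm=0\}$), so $gM=0$ by $1$-criticality.

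What the paper's heavier route buys is a framework that still works when the dehomogenized module is infinite dimensional. The remark after the proposition applies the same reasoning to $g$-torsionfree modules of linear growth, such as line modules. There $\mathcal{N}$ is holonomic but infinite dimensional, so the trace argument is unavailable.

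Two harmless slips in your write-up:
\begin{itemize}
\item $\dim_k N_0=c$, not $nc$; only the bound $\le nc$ matters.
\item The inclusion $F_i\mathcal{N}\subset F_{i+1}\mathcal{N}$ holds for any dehomogenization. What $\bfg$-torsionfreeness actually gives is $F_i\mathcal{N}\cong N_i$, and hence $\mathcal{N}\neq 0$.
\end{itemize}
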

\begin{proof}
  Let $M$ be a base-point module satisfying $\dimk_{k}M_{i}=c$ for $i \geq 0$ and $\dimk_{k}M_{i}=0$ for $i <0$.
  Assume that $M$ is $g$-torsionfree.
  Let $A^{[n]}$ be the $n$-th quasi-Veronese algebra of $A$ and ${}^{\nu_{\bfg}^{[n]}}(A^{[n]})$ the twisted algebra of $A^{[n]}$ by $\nu_{\bfg}^{[n]}$.
  By \lemref{lem-quasi-Vero} and \propref{prop-Zhang-twi}, the following categorical equivalence holds\textup{:}
\[ 
A\lGrMod \cong A^{[n]}\lGrMod \cong {}^{\nu_{\bfg}^{[n]}}(A^{[n]})\lGrMod.
\]
Additionally, ${}^{\nu_{\bfg}^{[n]}}(Q(M))$ is a $\bfg$-torsionfree graded ${}^{\nu_{\bfg}^{[n]}}(A^{[n]})$-module. 
Let $\mathcal{N}$ be the dehomogenization of ${}^{\nu_{\bfg}^{[n]}}(Q(M))$ with respect to $\bfg$ and $F\mathcal{N}=\{F_{i}\mathcal{N}\}_{i \in \bbZ}$ the filtration of $N$ defined by the dehomogenization.
Then, by \propref{prop-Weyl-hom}, ${}^{\nu_{\bfg}^{[n]}}(Q(M))$ is also a filtered $W_{1}(k)$-module.
Furthermore, by \cite[10.3.1]{Co} and the following equality, $\mathcal{N}$ is a holonomic module of $W_{1}(k)$ (see \cite{Co})\textup{:}
\[
\dimk_{k}F_{s}\mathcal{N} 
= \dimk_{k} {}^{\nu_{\bfg}^{[n]}}(Q(M))_{s}
= \dsum_{i=0}^{n-1}\dimk_{k}M_{sn-i}
= cn
\quad\text{  for $s > 0$.}
\]
Hence, a good filtration $\Gamma=\{\Gamma_{i}\}_{i \in \bbZ}$ of $\mathcal{N}$ and $c_{1} \neq 0, c_{0} \in \mathbb{Q}$ exists such that $\dimk_{k}\Gamma_{s}=c_{1}s+c_{0}$ for $s \gg 0$. 
Furthermore, $s_{0} \in \bbZ$ exists such that $\Gamma_{i} \subset F_{i+s_{0}}\mathcal{N}$ for $i \geq 0$ by \cite[8.3.2]{Co}.
Hence, the inequality
$cn = \dimk_{k}F_{s+s_{0}}\mathcal{N} \geq \dimk_{k}\Gamma_{s} = c_{1}s+c_{0}$ holds for $s \gg 0$.
This is a contradiction. 
Therefore, $M$ is not $g$-torsionfree, and $gM=0$ by \cite[Lemma 5]{cCV}.
\end{proof}
\begin{rem}
  Let $M=\bigoplus_{i \in \bbZ} M_{i}$ be a $g$-torsionfree left bounded graded module such that $\dimk_{k} M_{s}=c_{1}s+c_{0}$ for $s \gg 0$, where $c_{1}\neq 0$, $c_{0} \in \mathbb{Q}$ (e.g., $M$ is a line module).
  By the similar proof of \propref{prop-fat-point}, the dehomogenization of ${}^{\nu_{\bfg}^{[n]}}Q(M)$ is a holonomic module of $W_{1}(k)$.
\end{rem}
\section{$q'$-Heisenberg normal elements and point modules}\label{sec-trunc}
 In this section, for a finitely presented connected $\bbN$-graded $k$-algebra $A$ generated in degree $1$, we show more details of a relationship between a point scheme of $A$ and a $q'$-Heisenberg normal element of $A$.
At the end of this section, we show the structure of the set of closed points of the point scheme of the universal enveloping algebra of a color Lie algebra.
\subsection{$q'$-Heisenberg normal elements and truncated point modules}\label{subsec-trunc}
Throughout this subsection, let $A$ be a finitely presented connected $\bbN$-graded $k$-algebra generated in degree $1$ unless otherwise specified.
In this subsection, we show a relationship between a $q'$-Heisenberg normal element $g$ of $A$ and a truncated $g$-torsionfree point module over $A$.

First, we recall useful equalities obtained by the definition of a $q'$-Heisenberg normal element of $A$.
 For a $q'$-Heisenberg normal element $g=xy-uyx$ of $A$, the equalities $x^{2}y-2uxyx+u^{2}yx^{2}=0$ and $xy^{2}-2uyxy+u^{2}y^{2}x=0$ hold.
 Then, the same equalities in a down-up algebra hold as in \cite{IU}.
\begin{lem}\label{lem_IU-lem2.3}
 Let $A$ be a $\bbZ$-graded $k$-algebra \textup{(}not necessarily connected, $\bbN$-graded, finitely presented, or generated in degree $1$\textup{)}, and $g=xy-uyx$ be a $q'$-Heisenberg normal element of $A$ with degree $n$.
 Then, for an integer $r \geq 1$, the following equalities hold\textup{:}
 \begin{enumerate}
  \item[\textup{(1)}] $x^{r}y=ru^{r-1}xyx^{r-1}-(r-1)u^{r}yx^{r}$\textup{;}
  \item[\textup{(2)}] $yx^{r}=ru^{-(r-1)}x^{r-1}yx-(r-1)u^{-r}x^{r}y$.
 \end{enumerate}
\end{lem}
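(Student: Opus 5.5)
Both identities will come out of a single induction on $r$, with the base step supplied by a degree-two relation that follows from (i)--(iii). First I derive that relation. Substituting $g=xy-uyx$ into (ii) gives
\[
0=x(xy-uyx)-u(xy-uyx)x=x^{2}y-2uxyx+u^{2}yx^{2}.
\]
This is the first of the equalities noted just before \lemref{lem_IU-lem2.3}. Only (i) and (ii) are used here; the companion identity $xy^{2}-2uyxy+u^{2}y^{2}x=0$ comes from (iii) but is not needed. So the working relation is
\[
x^{2}y=2uxyx-u^{2}yx^{2}.
\]

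For (1) I induct on $r$. For $r=1$ the claim reads $xy=xy$. For $r=2$ it is exactly the working relation. Now assume (1) holds for $r$ and multiply it on the left by $x$:
\[
x^{r+1}y=ru^{r-1}x^{2}yx^{r-1}-(r-1)u^{r}xyx^{r}.
\]
Next I rewrite $x^{2}y$ in the first term using the working relation, which gives $x^{2}yx^{r-1}=2uxyx^{r}-u^{2}yx^{r+1}$. Collecting terms, the coefficient of $xyx^{r}$ is $2ru^{r}-(r-1)u^{r}=(r+1)u^{r}$. The coefficient of $yx^{r+1}$ is $-ru^{r+1}$. This is precisely (1) for $r+1$. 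The only real content is this bookkeeping of coefficients, which I expect to be the main (mild) obstacle.

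For (2) I proceed symmetrically. Multiply the working relation by $u^{-2}$ to get
\[
yx^{2}=2u^{-1}xyx-u^{-2}x^{2}y,
\]
which is (2) for $r=2$; the case $r=1$ is trivial. Assuming (2) for $r$, I multiply it on the right by $x$:
\[
yx^{r+1}=ru^{-(r-1)}x^{r-1}yx^{2}-(r-1)u^{-r}x^{r}yx.
\]
Substituting for $yx^{2}$ turns $x^{r-1}yx^{2}$ into $2u^{-1}x^{r}yx-u^{-2}x^{r+1}y$. The coefficient of $x^{r}yx$ becomes $(r+1)u^{-r}$ and that of $x^{r+1}y$ becomes $-ru^{-(r+1)}$, which is (2) for $r+1$.

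Alternatively, (2) could be obtained from (1) by the anti-automorphism-free trick of swapping roles with $u\mapsto u^{-1}$, but the direct induction is cleaner. Note that neither regularity nor normality of $g$ is used, only relations (i) and (ii). This is consistent with the lemma holding for arbitrary $\bbZ$-graded $A$.
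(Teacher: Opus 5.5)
Your proof is correct and takes the same approach as the paper. The paper notes that (i) and (ii) yield $x^{2}y-2uxyx+u^{2}yx^{2}=0$, which is the down-up algebra relation, and then simply cites \cite[Lemma 2.3]{IU}; your two inductions spell out that argument, and your coefficient bookkeeping in both inductive steps is right.
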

\begin{proof}
See \cite[Lemma 2.3]{IU}.
\end{proof}
The display of a $q'$-Heisenberg normal element $g$ of $A$ is not necessarily unique, that is, we may have displays $g=xy-uyx=x'y'-u'y'x'$, where $x, x' \in A_{1}$, $y,y' \in A_{n-1}$, and $u, u' \in k^{\times}$.
However, when $g$ is displayed by $g=xy-uyx$, a $g$-torsionfree truncated point module over $A$ has a useful relationship with $x$.
\begin{prop}\label{prop-tru-torfree}
  Let $A$ be a finitely presented connected $\bbN$-graded $k$-algebra generated in degree $1$, $g=xy-uyx$ be a $q'$-Heisenberg normal element of $A$ with degree $n$ of $A$, and $P=\bigoplus_{i = 0}^{d}k m_{i}$ be a truncated $g$-torsionfree point module of length $d + 1$ over $A$, where $d \geq 2n-1$.
  Then, $x m_{i} \neq 0$ holds for $0 \leq i \leq d-1$.
\end{prop}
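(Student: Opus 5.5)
The plan is to argue by contradiction. Suppose $x m_{i_0}=0$ for some $0\le i_0\le d-1$. Since $P$ is a truncated point module, every homogeneous element of degree $e$ acts from $km_j$ to $km_{j+e}$ by a scalar, or by zero when $j+e>d$. So every relation in $A$ applied to some $m_j$ becomes a relation between scalars. Two facts will be used throughout.
\begin{itemize}
\item[(a)] By \dfnref{dfn-trunc-torfree} and \lemref{lem-trunc-basic}, $g m_j\neq 0$ for $0\le j\le d-n$.
\item[(b)] The $q'$-Heisenberg relations $xg=ugx$, $gy=uyg$ and $g=xy-uyx$ hold, together with their consequences in \lemref{lem_IU-lem2.3}.
\end{itemize}

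\emph{Step 1: propagation along residue classes modulo $n$.}
\begin{itemize}
\item Upward: if $x m_j=0$ with $j\le d-n$, then $x g m_j=u g x m_j=0$. Since $g m_j$ is a nonzero multiple of $m_{j+n}$, this gives $x m_{j+n}=0$ whenever $j+n\le d-1$.
\item Downward: if $x m_j=0$ with $j\ge n$ and $j+1\le d$, then $g x m_{j-n}=u^{-1}x g m_{j-n}$ is a multiple of $x m_j=0$. Now $x m_{j-n}\in P_{j-n+1}$ and $j-n+1\le d-n$, so torsionfreeness forces $x m_{j-n}=0$.
\end{itemize}
Hence the vanishing of $x$ at $m_{i_0}$ spreads to all $m_j$ with $j\equiv i_0 \pmod n$ and $0\le j\le d-1$. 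In particular we may take $i_0\le n-1$. The hypothesis $d\ge 2n-1$ guarantees that the class contains both $i_0$ and $i_0+n\le d-1$, so the window of length $2n$ is available.

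\emph{Step 2: getting $g$ to kill some $m_j$ with $j\le d-n$.} Write $g m_j = x y m_j - u\,y x m_j$.
\begin{itemize}
\item If $x m_j=0$, this reduces to $g m_j = x y m_j$, a multiple of $x m_{j+n-1}$.
\item If $x m_{j+n-1}=0$, the first term vanishes and $g m_j=-u\,yxm_j$.
\end{itemize}
So it suffices to find an index $j\le d-n$ with $x m_j=0$ and $x m_{j+n-1}=0$. Then $g m_j=0$, contradicting (a).

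The indices $j$ and $j+n-1$ lie in adjacent residue classes modulo $n$. The remaining task is therefore to move the vanishing of $x$ from the class of $i_0$ to the class of $i_0-1$ (or $i_0+1$). For this I would apply the identities of \lemref{lem_IU-lem2.3}, chiefly $x^2y-2uxyx+u^2yx^2=0$ and, more generally, $x^r y = r u^{r-1}xyx^{r-1}-(r-1)u^r yx^r$, to suitable $m_j$ near $i_0$.

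For example, apply $x^2y-2uxyx+u^2yx^2=0$ to $m_{i_0-1}$. The term $yx^2m_{i_0-1}$ is a multiple of $yxm_{i_0}=0$, which leaves the scalar identity $x^2y\,m_{i_0-1}=2u\,xyx\,m_{i_0-1}$. I would combine this with $g m_{i_0-1}=xym_{i_0-1}-u\,yxm_{i_0-1}$ and with $g m_{i_0-1}\neq 0$. The aim is to deduce that either $x m_{i_0-1}=0$ or $x m_{i_0+n-1}=0$. Either conclusion, after Step 1, places a zero of $x$ in an adjacent class, and we then conclude via Step 2.

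\emph{Main obstacle.} I expect this class-shifting step to be the hardest part. The scalar bookkeeping has to be organized so that no case is lost: the case $y m_j=0$ must be handled separately, and this is where $gy=uyg$ and torsionfreeness of $g m_j$ enter. The edge cases $i_0=0$ and $i_0=d-1$ also need care, which is exactly where the bound $d\ge 2n-1$ is used. If the direct scalar approach becomes unwieldy, I would instead use $r=n$ in \lemref{lem_IU-lem2.3}. That expresses $x^n y$ through $xyx^{n-1}$ and $yx^n$, so a single vanishing $x m_{i_0}=0$ feeds into every index of the window $[i_0-n+1,i_0]$ at once.
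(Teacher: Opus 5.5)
\textbf{Verdict: there is a genuine gap.} Your Step 1 and your Step 2 criterion match the paper's opening moves. Step 1 propagates $xm_j=0$ along a residue class mod $n$ using $xg=ugx$ and torsionfreeness. Step 2 observes that $gm_j=xym_j-u\,yxm_j$ vanishes once $xm_j=0=xm_{j+n-1}$. But the class-shifting step you defer is the whole content of the proposition, and the proposal does not supply it.

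Your trial computation shows why it does not close. Write $xm_i=p_{i+1}m_{i+1}$ and $ym_i=q_{i+n-1}m_{i+n-1}$. Applying $x^2y-2uxyx+u^2yx^2=0$ to $m_{i_0-1}$ yields only $p_{i_0+n}\bigl(p_{i_0+n-1}q_{i_0+n-2}-2u\,p_{i_0}q_{i_0+n-1}\bigr)=0$. In the second branch, $gm_{i_0-1}=u\,p_{i_0}q_{i_0+n-1}\,m_{i_0+n-1}$, and nothing forces this to vanish.

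More fundamentally, nothing in your outline uses $\mathrm{char}\,k=0$, and the analogous statement is false in characteristic $2$. Take $A=U(\mathfrak{h})$ with $g=xy-yx$ central, so $n=2$ and $u=1$. In characteristic $2$ its defining relations become $x^2y=yx^2$ and $xy^2=y^2x$. On a truncated point module with points $a_i=(p_i:q_i)\in\bbP^{1}$, these say exactly $a_{i+2}=a_i$, while $gm_{i-1}\neq 0$ iff $a_i\neq a_{i+1}$. So the alternating sequence $(1:0),(0:1),(1:0),\ldots$ gives a $g$-torsionfree truncated point module of any length with $xm_1=0$. A valid argument must therefore divide by an integer such as $n$ at the crucial point, and your sketch does not say where.

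The paper fills the gap as follows. Let $B\subseteq\{1,\ldots,n\}$ be the set of $i$ with $p_i=0$.

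\emph{Two or more vanishing classes.} Pick $i<j$ in $B$ with $j-i$ minimal. If $j=i+1$, this is your Step 2. If $j-i\ge 2$, the paper does not produce a second zero of $x$ in an adjacent class. Instead, \lemref{lem_IU-lem2.3}(2) with $r=j-i$, applied to $m_i$, forces $q_{i+n}=0$, i.e.\ $ym_{i+1}=0$. Then $gm_i=0$, because $xm_{i+n-1}=0$ and $yxm_i\in k\,ym_{i+1}$.

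\emph{$B=\{n\}$.} The same lemma with $r=n$, applied to $m_0$, gives $ym_1=0$ and hence $gm_0=0$.

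\emph{$B=\{i\}$ with $i<n$.} The paper combines three identities:
\begin{itemize}
\item \lemref{lem_IU-lem2.3}(1) with $r=n-i$ on $m_i$;
\item \lemref{lem_IU-lem2.3}(2) with $r=i$ on $m_0$;
\item $xy^2-2uyxy+u^2y^2x=0$ (which comes from $gy=uyg$) on $m_0$.
\end{itemize}
Together these form a $3\times 3$ linear system in $p_{2n-1}q_{2n-2}q_{n-1}$, $uq_{2n-1}p_nq_{n-1}$ and $u^2q_{2n-1}q_np_1$, with determinant $n$. Characteristic zero makes all three products vanish, which forces $gm_0=0$ or $gm_{n-1}=0$.

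These two ingredients are what your proposal is missing: a contradiction through a zero of $y$ rather than a second zero of $x$, and the determinant-$n$ elimination in the single-class case, which is where characteristic zero enters.
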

\begin{proof}
  Let $\{p_{i+1}\}_{i=0}^{d-1}$ (resp. $\{q_{i+(n-1)}\}_{i=0}^{d-(n-1)}$, $\{\lambda_{i+n}\}_{i=0}^{d-n}$) be a sequence defined by $p_{i+1}m_{i+1}=x m_{i}$ (resp. $q_{i+(n-1)}m_{i+(n-1)}=y m_{i}$, $\lambda_{i+n}m_{i+n}=g m_{i}$).
  For $1 \leq i \leq d-n$, the following equality holds\textup{:}
  \[
  p_{i+n}\lambda_{i+n-1}m_{i+n}=xgm_{i-1}=ugxm_{i-1}=u\lambda_{i+n}p_{i}m_{i+n}.
  \]
  Since $P$ is a truncated $g$-torsionfree point module over $A$, $\lambda_{i+n-1} \neq 0$ and $\lambda_{i+n}\neq 0$.
  Then, the equality $p_{i+n}=0$ holds if and only if $p_{i}=0$ holds for $1 \leq i \leq d-n$. 

  The set $\{ i \mid p_{i}=0 \text{ for } 1 \leq i \leq n \}$ is denoted as $B$.
  We will show that $B$ is empty.
  First, assume that the number of elements in $B$ is two or more.
  Let $(i,j)$ be a pair of elements in $B$ such that $j > i$ and minimize $j-i$.
  \begin{enumerate}
  \item The case that $j-i=1$ holds.
  A contradiction is obtained from the following equality\textup{:}
  \[
  0 \neq gm_{i} = (xy-uyx)m_{i} = (p_{i+n}q_{i+n-1}-uq_{i+n}p_{i+1})m_{i+n}=0.
  \]
  \item The case that $j-i\geq 2$ holds.
  \lemref{lem_IU-lem2.3} gives the following\textup{:}
  \begin{align*}
  0
  &=q_{j+n-1}p_{j}p_{j-1} \cdots p_{i+1}m_{j+n-1}\\
  &=yx^{j-i}m_{i}\\
  &=((j-i)u^{-(j-i-1)}x^{j-i-1}yx-(j-i-1)u^{-(j-i)}x^{j-i}y)m_{i}\\
  &=((j-i)u^{-(j-i-1)}p_{j+n-1} \cdots p_{i+n+1} q_{i+n} p_{i+1}\\
  &\quad
  -(j-i-1)u^{-(j-i)}p_{j+n-1} \cdots p_{i+n+1} p_{i+n} q_{i+n-1})m_{j+n-1}\\
  &=(j-i)u^{-(j-i-1)}p_{j+n-1} \cdots p_{i+n+1} q_{i+n} p_{i+1}m_{j+n-1}.
  \end{align*}
  It follows that $q_{i+n}=0$ by the choice of $(i, j)$, and the following equality yields a contradiction\textup{:}
  \[
  0 \neq gm_{i} = (xy-uyx)m_{i} = (p_{i+n}q_{i+n-1}-uq_{i+n}p_{i+1})m_{i+n}=0.
  \]
  \end{enumerate}
  Therefore, the number of elements in $B$ is less than two.
  Assume that $B = \{ i \}$, where $1 \leq i \leq n$.
  If $i = n$, then the following equality holds\textup{:}
  \begin{align*} 
  0
  &=q_{2n-1} p_{n} \cdots p_{1} m_{2n-1}\\
  &=yx^{n} m_{0}\\
  &=(nu^{-(n-1)}x^{n-1}yx-(n-1)u^{-n}x^{n}y)m_{0}\\
  &=(nu^{-(n-1)}p_{2n-1} \cdots p_{n+1} q_{n} p_{1}
    -(n-1)u^{-n}p_{2n-1} \cdots p_{n+1} p_{n} q_{n-1})m_{2n-1}\\
  &=nu^{-(n-1)}p_{2n-1} \cdots p_{n+1} q_{n} p_{1} m_{2n-1}.\\
  \end{align*}
  It follows that $q_{n}=0$, and a contradiction is obtained from the following equality\textup{:}
  \[
  0 \neq gm_{0} 
  = (xy-uyx)m_{0} 
  = (p_{n} q_{n-1} -u q_{n}p_{1})m_{n}
  =0.
  \]
  Hence, $i \neq n$, that is, $p_{n} \neq 0$.
  When $i \neq n-1$, \lemref{lem_IU-lem2.3} gives the following\textup{:}
  \begin{equation}\label{equation_0}
  \begin{aligned}
  0
  &=p_{2n-1} \cdots p_{i+n} q_{i+n-1} m_{2n-1}\\
  &=x^{n-i}y m_{i}\\
  &=((n-i)u^{n-i-1}xyx^{n-i-1}-(n-i-1)u^{n-i}yx^{n-i})m_{i}\\
  &=((n-i)u^{n-i-1}p_{2n-1} q_{2n-2} p_{n-1} \cdots p_{i+1}\\
  &\quad
   -(n-i-1)u^{n-i}q_{2n-1} p_{n} p_{n-1} \cdots p_{i+1})m_{2n-1}.\\
  \end{aligned}
  \end{equation}
  It follows that
  \begin{equation*}
    (n-i)p_{2n-1} q_{2n-2} q_{n-1} -(n-i-1)u q_{2n-1} p_{n} q_{n-1} =0.
  \end{equation*}
  When $i=n-1$, the same equality also holds.
  Similarly, the following equality is obtained\textup{:}
   \begin{equation*}
   iu^{2}q_{2n-1}q_{n}p_{1}-(i-1)uq_{2n-1}p_{n}q_{n-1}=0.
   \end{equation*}
  Also, the following equality is obtained\textup{:}
  \begin{align*}
    0&=(xy^{2}-2uyxy+u^{2}y^{2}x)m_{0}\\
    &=(p_{2n-1}q_{2n-2}q_{n-1}-2uq_{2n-1}p_{n}q_{n-1}+u^{2}q_{2n-1}q_{n}p_{1})m_{2n-1}.
  \end{align*}
  Consequently, the above equality yields
  \[
  \left(
  \begin{array}{ccc}
    (n-i) & -(n-i-1) & 0 \\
    0 & -(i-1) & i \\
    1 & -2 & 1 \\
  \end{array}
  \right) 
  \left(
  \begin{array}{c}
    p_{2n-1}q_{2n-2}q_{n-1}\\
    uq_{2n-1}p_{n}q_{n-1}\\
    u^{2}q_{2n-1}q_{n}p_{1}
  \end{array}
  \right)
  =0.
  \]
  Since 
  $\mathrm{det}
  \left(
  \begin{array}{ccc}
    (n-i) & -(n-i-1) & 0 \\
    0 & -(i-1) & i \\
    1 & -2 & 1 \\
  \end{array}
  \right) 
  = n$, 
  $p_{2n-1}q_{2n-2}q_{n-1}$, $q_{2n-1}p_{n}q_{n-1}$, $q_{2n-1}q_{n}p_{1}$
  are zero.
  If $q_{2n-1} \neq 0$, then $p_{n}q_{n-1}=q_{n}p_{1}=0$, and a contradiction is obtained from the following equality\textup{:} 
  \[
  0 \neq gm_{0}
  = (xy-uyx)m_{0}
  = (p_{n}q_{n-1}-uq_{n}p_{1})m_{n}
  =0.
  \]
  Therefore, $q_{2n-1} = 0$.
  When $i \neq n-1$, equality \eqref{equation_0} gives the following\textup{:}
  \begin{align*}
   0
   &=(n-i)u^{n-i-1}p_{2n-1} q_{2n-2} p_{n-1} \cdots p_{i+1}
   -(n-i-1)u^{n-i}q_{2n-1} p_{n} p_{n-1} \cdots p_{i+1}\\
   &=(n-i)u^{n-i-1}p_{2n-1} q_{2n-2} p_{n-1} \cdots p_{i+1}.
  \end{align*}
  It follows that $p_{2n-1}q_{2n-2}=0$. 
  When $i=n-1$, the same equality also holds. 
  Then, 
  \[
  0 \neq gm_{n-1} =(xy-uyx)m_{n-1}
  =(p_{2n-1}q_{2n-2}-uq_{2n-1}p_{n})m_{2n-1}=0.
  \]
  This is a contradiction.
  Hence, $B$ is empty, that is, $xm_{i} \neq 0$ for $0 \leq i \leq d-1$.
\end{proof}
\propref{prop-tru-torfree} shows the nonexistence of a long truncated $g$-torsionfree point module over $A$, where $g$ is a $q'$-Heisenberg normal element of $A$.
\begin{thm}\label{thm-tru-pmod}
  Let $A$ be a finitely presented connected $\bbN$-graded $k$-algebra generated in degree $1$, and $g=xy-uyx$ be a $q'$-Heisenberg normal element of $A$ with degree $n$.
  For $d \geq 2n-1$, 
  $A$ has no truncated $g$-torsionfree point module of length $d+1$ over $A$.
\end{thm}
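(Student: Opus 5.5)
The plan is to combine \propref{prop-tru-torfree} with the two commutation relations (ii) and (iii) of \dfnref{dfn-one}. These reduce the action of $x$, $y$, $g$ on $P$ to scalar recursions, and the recursions force $n=0$ in $k$, which is impossible in characteristic zero. This is a finite-dimensional version of the fact that $[X,Y]=1$ has no finite-dimensional representations.

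Suppose $P=\bigoplus_{i=0}^{d}km_i$ is a truncated $g$-torsionfree point module of length $d+1$ with $d\ge 2n-1$.

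\textbf{Step 1: normalise the basis.} By \propref{prop-tru-torfree}, $xm_i\neq0$ for $0\le i\le d-1$. So we may rescale and assume $m_i=x^im_0$, that is, $xm_i=m_{i+1}$. As in the proof of \propref{prop-tru-torfree}, write $ym_i=q_{i+n-1}m_{i+n-1}$ and $gm_i=\lambda_{i+n}m_{i+n}$. Torsionfreeness gives $\lambda_{j}\neq0$ for $n\le j\le d$. Relation (i) gives
\[
\lambda_{i+n}=q_{i+n-1}-uq_{i+n}.
\]

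\textbf{Step 2: solve the recursions.} Apply (ii), $xg=ugx$, to $m_i$ to get $\lambda_{i+n}=u\lambda_{i+n+1}$. Hence $\lambda_j=\lambda_nu^{-(j-n)}$ is geometric. Put $q_j=u^{-j}r_j$. The relation from (i) then becomes
\[
r_{j-1}-r_j=\lambda_nu^{n-1}=:c\neq0.
\]
So $r_j=r_{n-1}-c(j-n+1)$ is arithmetic in $j$, on the range of indices where everything is defined.

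\textbf{Step 3: use (iii) and conclude.} Apply (iii), $gy=uyg$, to $m_0$. This is legitimate because the output lies in degree $2n-1\le d$, and it is exactly where the hypothesis $d\ge 2n-1$ enters. We obtain
\[
q_{n-1}\lambda_{2n-1}=u\lambda_nq_{2n-1}.
\]
Substituting the formulas from Step 2, both sides carry the factor $u^{-2n+2}\lambda_n$, and the equation reduces to $r_{n-1}=r_{2n-1}=r_{n-1}-cn$. Therefore $cn=0$. Since $c\neq0$ and $\operatorname{char}k=0$, this is a contradiction, so no such $P$ exists.

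The main obstacle is really Step 1, which \propref{prop-tru-torfree} already handles: it guarantees that $x$ acts injectively along the whole length, so that the basis can be normalised. What remains is bookkeeping. I will check carefully that every index used ($\lambda_j$ for $n\le j\le 2n-1$, $q_j$ for $n-1\le j\le 2n-1$) lies within $0\le\cdot\le d$. I will also check the edge case $n=1$, where $y$ has degree $0$ and the same computation goes through.
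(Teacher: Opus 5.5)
Your argument is correct and is essentially the paper's proof. Both use \propref{prop-tru-torfree} to make the $x$-coefficients nonzero, and then evaluate the defining relations on $m_0,\dots,m_{2n-1}$ (the paper through the consequence $x^2y-2uxyx+u^2yx^2=0$ of (i) and (ii), together with (ii) and (iii) at $m_0$), obtaining a recurrence that telescopes to $n$ times a nonzero multiple of $\lambda_n$ being zero. Your normalisation $xm_i=m_{i+1}$, $q_j=u^{-j}r_j$ turns the paper's second-order recurrence $r_i-2ur_{i+1}+u^2r_{i+2}=0$ and its boundary identity $r_{n+1}=u^{-n}r_1$ into an arithmetic progression with $r_{n-1}=r_{2n-1}$, which is tidier bookkeeping of the same computation.
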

\begin{proof}
The notation from the proof of \propref{prop-tru-torfree} is retained.
Let $\{r_{i}\}_{i=1}^{n+1}$ be a sequence defined by 
$r_{i}=\dfrac{q_{i+n-2}}{p_{i+n-2}p_{i+n-3}\cdots p_{i}}$
and $P=\bigoplus_{i = 0}^{d}k m_{i}$, a truncated $g$-torsionfree point module of length $d + 1$ over $A$, where $d \geq 2n-1$.
The definition of a $q'$-Heisenberg normal element of $A$ gives the following equalities\textup{:}
\[
0=(x^{n-1}g-u^{n-1}gx^{n-1})m_{0}=(p_{2n-1} \cdots p_{n+1}\lambda_{n}-u^{n-1}\lambda_{2n-1}p_{n-1} \cdots p_{1})m_{2n-1}
\]
and
\[
0=(yg-u^{-1}gy)m_{0}=(q_{2n-1}\lambda_{n}-u^{-1}\lambda_{2n-1}q_{n-1})m_{2n-1}.
\]
Then, the equality $r_{n+1}=u^{-n}r_{1}$ holds.
%
Meanwhile, for $1 \leq i \leq n-1$, 
\begin{align*}
   0&=p^{-1}_{i+n}p^{-1}_{i+n-1}\cdots p^{-1}_{i}
      (x^{2}y-2uxyx+u^{2}yx^{2})m_{i-1}\\
    &=p^{-1}_{i+n}p^{-1}_{i+n-1}\cdots p^{-1}_{i}
       (p_{i+n}p_{i+n-1}q_{i+n-2}-2up_{i+n}q_{i+n-1}p_{i} +u^{2}q_{i+n}p_{i+1}p_{i})m_{i+n}\\
    &=(r_{i}-2ur_{i+1} +u^{2}r_{i+2})m_{i+n}.
\end{align*}
It follows that $r_{i+1}-u^{-1}r_{i}=u^{1-i}(r_{2}-u^{-1}r_{1})$ for $1 \leq i \leq n$ and then
\begin{align*}
 0
 &=r_{n+1}-u^{-n}r_{1}
 =\dsum^{n}_{i=1}u^{i-n}(r_{i+1}-u^{-1}r_{i})\\
 &=\dsum^{n}_{i=1}u^{i-n}u^{1-i}(r_{2}-u^{-1}r_{1})
 =nu^{1-n}(r_{2}-u^{-1}r_{1}).
\end{align*}
Consequently, the following equality leads to a contradiction\textup{:}
\[
0 \neq gm_{0}
  =(p_{n}q_{n-1}-uq_{n}p_{1})m_{n}
  =p_{1}p_{2}\cdots p_{n} (r_{1}-ur_{2})m_{n}
  =0.
\]
\end{proof}
We remark that, since a truncated $g$-torsionfree point module over $A$ is not a $g$-torsionfree module over $A$, \thmref{thm-tru-pmod} is not a corollary of \propref{prop-fat-point}.
\subsection{Point modules over the universal enveloping algebras of color Lie algebras}\label{sec-color}
Throughout this subsection, for integers $m \geq 1$ and $m \geq i \geq 0$, let  $e_{i} \in \bbZ^{m+1}$ denote $(\delta_{i,0}, \delta_{i, 1} , \ldots , \delta_{i, m})$, where $\delta_{i,j}$ is the Kronecker delta.
Also, for a finite dimensional $(\bbZ^{m+1}, \varepsilon)$-color Lie algebra $L$ with $L_{-}=0$, 
let $L_{\bfone}$ denote $\bigoplus_{0 \leq i \leq m}L_{e_{i}}$ and $n_{L}$ denote $n_{L}=\mathrm{max}\{j \mid L_{\bfone}^{j} \neq 0 \}$, where $L_{\bfone}^{1}=L_{\bfone}$ and $L_{\bfone}^{j+1}=[L_{\bfone}^{j},L_{\bfone}]$ for $j \geq 1$.
Furthermore, the map $|\quad|_{\bbZ}:h(L)\setminus \{0\} \rightarrow \bbZ$ is defined as $L_{(a_{0},\ldots, a_{m})} \ni x \mapsto a_{0}+ \cdots + a_{m} \in \bbZ$.
When $U(L)$ is considered as a $\bbZ$-graded $k$-algebra based on \remref{rem-grading} \eqref{rem-enu-grading}, let $\deg x \in \bbZ$ denote the degree of $x \in h(U(L))$.

In this subsection, we use \thmref{thm-tru-pmod} on the universal enveloping algebra of a color Lie algebra.
First, we recall the definition of a skew polynomial algebra on $m+1$ variables
(see \cite{BDL,HvO,MR,Vit} for details)\textup{:}
  \[
  S_{\omega}=k\langle x_{0}, x_{1}, \ldots ,x_{m} \rangle/\langle x_{i}x_{j}-\omega_{i,j}x_{j}x_{i}\mid 0 \leq i,j \leq m\rangle
  \] 
  where $\omega=(\omega_{i,j})$ is an $(m+1)\times (m+1)$  matrix over $k$ such that $\omega_{i,j}\omega_{j,i}=1$ and  $\omega_{l, l}=1$ for $0 \leq i,j,l \leq m$.
In the following, by assuming that the degrees of $x_{i}$ are $1$, we consider $S_{\omega}$ as a $\bbZ$-graded $k$-algebra generated in degree $1$.
We recall an important result of the set of isomorphism classes of point modules over a skew polynomial algebra.
\begin{thm}[{\cite[Theorem 1]{BDL}}, \cite{Vit}]\label{thm-BDL}
  Let $S_{\omega}$ be a skew polynomial algebra on $m+1$ variables with $\omega = (\omega_{i,j})$ for $m \geq 2$.
  Then, a one-to-one correspondence exists between the set of isomorphism classes of point modules over $S_{\omega}$ and 
  \[
  \{(p_{0}:p_{1}:\cdots:p_{m})\in \bbP^{m}\mid (\omega_{i,j}\omega_{j,l}-\omega_{i,l})p_{i}p_{j}p_{l}=0 \text{ for } 0\leq i < j < l \leq m \}.
  \]
\end{thm}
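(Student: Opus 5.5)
The plan is to reduce to solving the defining equations of $\Gamma(S_{\omega})$ directly. By \propref{prop-ATV-corres}, isomorphism classes of point modules correspond to closed points of $\Gamma(S_{\omega})$, which is the inverse limit of the $\Gamma_{d}(S_{\omega})\subset(\bbP^{m})^{\times d}$. Since the relations $x_{i}x_{j}-\omega_{i,j}x_{j}x_{i}$ are quadratic, $\Gamma_{d}$ is cut out by the multilinearizations
\[
a^{(r)}_{i}a^{(r+1)}_{j}-\omega_{i,j}a^{(r)}_{j}a^{(r+1)}_{i}=0,\qquad 0\le i,j\le m,\ 1\le r\le d-1,
\]
where $a^{(r)}=(a^{(r)}_{0}:\cdots:a^{(r)}_{m})$ is a point of the $r$-th factor of $(\bbP^{m})^{\times d}$. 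The whole statement is therefore a computation with these equations. I would first describe $\Gamma_{2}$, then show that $\Gamma_{3}\to\Gamma_{2}$ has image exactly the locus singled out by the cubic condition, and finally that every such point extends uniquely to $\Gamma_{d}$ for all $d$.

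First, I would identify $\Gamma_{2}$: a pair $(a,b)$ lies in it iff the $2\times 2$ ``$\omega$-minors'' $a_{i}b_{j}-\omega_{i,j}a_{j}b_{i}$ all vanish. Let $I=\mathrm{supp}(a)$. If $|I|\ge 2$, these equations force $\mathrm{supp}(b)=I$ and $b_{i}=\phi_{i}a_{i}$, where the scalars satisfy $\phi_{j}/\phi_{i}=\omega_{i,j}^{-1}$ up to a common factor. Hence $b$ is uniquely determined by $a$ on $I$, namely $b=\sigma(a)$ for an explicit map: fix $i_{0}\in I$ and put $b_{j}=\omega_{j,i_{0}}a_{j}$. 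This is well defined iff the scalars are consistent, that is, iff $\omega_{i,j}\omega_{j,l}=\omega_{i,l}$ for all $i,j,l\in I$. If $|I|=1$, so $a$ is a coordinate point, then $b$ is arbitrary.

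Next comes the main step: showing that a point $p$ is the first component of a point of $\Gamma(S_{\omega})$ iff the cubic conditions hold. If the support of $p$ contains three indices $i<j<l$, then $p_{i}p_{j}p_{l}\neq 0$, and the condition coming from length $3$ is exactly $\omega_{i,j}\omega_{j,l}=\omega_{i,l}$. Under these conditions $b=\sigma(p)$ has the same support, the same consistency holds for $b$, and induction gives a unique point module. If the support has size $2$, say $\{i,j\}$, then consistency is automatic and the cubic terms vanish. If the support has size $1$, $p=e_{i}$, then a successor $b$ exists and the module continues (for example, by taking $b=e_{i}$), so $p$ still appears.

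The main obstacle is the bijectivity, i.e.\ the uniqueness of the point module over $p$. For $|\mathrm{supp}\,p|\ge 2$, uniqueness holds because $b$ is forced; for coordinate points the successor in $\Gamma_{2}$ is free, so several truncated modules lie over $e_{i}$. I would handle this by arguing that, in $\Gamma_{3}$, if $b$ is not a coordinate point then its own successor $c$ is forced, and $(e_{i},b,c)$ must satisfy the compatibility equations between consecutive factors. I expect these to force $\mathrm{supp}(b)$ to be contained in a set where $b$ is the shift of some genuine point, making the correspondence inherently about the first coordinate. I would try to reconcile this with the statement by checking carefully whether this family is excluded for $m\ge 2$. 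If it is not, the cleanest route is to cite \cite[Theorem 1]{BDL} for the precise parametrization, since the statement is quoted from there.
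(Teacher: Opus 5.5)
There is a genuine gap, and it comes from a computational error in your description of $\Gamma_{2}$. You assert that when $a=e_{i}$ is a coordinate point its successor $b$ is arbitrary. With your own equations this is false. The equation indexed by $(i,j)$, $j\neq i$, reads $a_{i}b_{j}-\omega_{i,j}a_{j}b_{i}=b_{j}=0$, so $b=e_{i}$ is forced.

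The same argument works in general. For any $a$ with $I=\mathrm{supp}(a)$, the equations indexed by $(i,j)$ with $i\in I$ and $j\notin I$ give $b_{j}=0$, so $\mathrm{supp}(b)\subseteq I$. Combined with your ratio computation on $I$, this shows that the successor of $a$, when it exists, is unique and has the same support as $a$. So the ``several truncated modules over $e_{i}$'' do not exist, and your final paragraph is chasing an artefact.

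As written, however, the proposal never proves injectivity of the map sending a point module to its first point. It leaves open whether extra modules lie over coordinate points, considers that the statement might fail, and falls back on citing \cite{BDL}. That citation is all the paper itself offers for this theorem; it gives no proof.

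Once the error is fixed, your computation gives a complete elementary proof, and a shorter one than your outline suggests, because the cubic condition already appears in $\Gamma_{2}$, not in $\Gamma_{3}$.
\begin{itemize}
\item For $|I|\geq 2$, a successor exists iff there are nonzero scalars $\phi_{j}$ ($j\in I$) with $\phi_{j}=\omega_{i,j}\phi_{i}$ for all $i,j\in I$, i.e.\ iff $\omega_{i,j}\omega_{j,l}\omega_{l,i}=1$ for all $i,j,l\in I$.
\item This product is invariant under cyclic permutations and inverted by transpositions. So it suffices to take $i<j<l$, which is precisely the condition $(\omega_{i,j}\omega_{j,l}-\omega_{i,l})p_{i}p_{j}p_{l}=0$.
\item Hence the first projection of $\Gamma_{2}$ is exactly the set $V$ in the statement. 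On closed points, $\Gamma_{2}$ is the graph of a support-preserving map $\sigma\colon V\to V$.
\item $\Gamma_{d}$ consists of the $d$-tuples whose consecutive pairs lie in $\Gamma_{2}$. So each $p\in V$ lies under exactly one compatible sequence $(p,\sigma(p),\sigma^{2}(p),\ldots)$, and \propref{prop-ATV-corres} gives the bijection.
\end{itemize}

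Finally, a harmless convention slip. With your multilinearization $a_{i}b_{j}-\omega_{i,j}a_{j}b_{i}$, one gets $\phi_{j}/\phi_{i}=\omega_{i,j}$ and $b_{j}=\omega_{i_{0},j}a_{j}$, not $\omega_{j,i_{0}}a_{j}$. Your formula corresponds to the opposite module convention, and the set $V$ is the same for $\omega$ and its transpose.
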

Also, when $m=1$, a one-to-one correspondence exists between the set of isomorphism classes of point modules over $S_{\omega}$ and set of closed points of $\bbP^{1}$ by \cite{ATV90}.
By those facts and \thmref{thm-tru-pmod}, we obtain the structure of the set of isomorphism classes of point modules over the universal enveloping algebra of color Lie algebras.

We remark that, for a finite dimensional $(\bbZ^{m+1}, \varepsilon)$-color Lie algebra $L$ with $L_{-}=0$ generated by $\{\theta_{0},\theta_{1}, \ldots , \theta_{m} \}$ with $|\theta_{i}|=e_{i}$ for $0 \leq i \leq m$, 
$S_{\varepsilon}(L_{\bfone})$ is a skew polynomial algebra 
since $S_{\varepsilon}(L_{\bfone})\cong S_{\varepsilon}(L/\bigoplus_{|a|_{\bbZ}\geq 2} L_{a})$.
\begin{thm}\label{thm-color-var}
  Let $L$ be a finite dimensional $(\bbZ^{m+1}, \varepsilon)$-color Lie algebra with $L_{-}=0$ generated by $\{\theta_{0}, \ldots , \theta_{m} \}$ with $|\theta_{i}|=e_{i}$ for $0 \leq i \leq m$.
  Then, for $d \geq  2n_{L}-1$, a one-to-one correspondence exists between the set of isomorphism classes of truncated point modules of length $d+1$ over $U(L)$ and that of $S_{\varepsilon}(L_{\bfone})$.
    In particular, for $m \geq 2$, the set of isomorphism classes of point modules over $U(L)$ corresponds one-to-one to
  \[
    \{(p_{0}:p_{1}:\cdots:p_{m})\in \bbP^{m}\mid (\omega_{i,j}\omega_{j,l}-\omega_{i,l})p_{i}p_{j}p_{l}=0 \text{ for } 0\leq i < j < l \leq m \}
  \]
  where $\omega_{i,j}:=\varepsilon(e_{i}, e_{j})$ for $0 \leq i,j \leq m$.
  For $m=1$, the set of isomorphism classes of point modules over $U(L)$ corresponds one-to-one to the set of closed points of $\bbP^{1}$.
\end{thm}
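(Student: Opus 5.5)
Our strategy is to kill the higher-degree part of $L$ in stages, using \thmref{thm-tru-pmod} at each stage. Put $U=U(L)$, $\bbZ$-graded as in \remref{rem-grading}, and filter $L$ by $\bbZ$-degree: $L^{\geq j}:=\bigoplus_{|a|_{\bbZ}\geq j}L_{a}$. Since $L$ is generated by the $\theta_{i}$, which all have $\bbZ$-degree $1$, we have $L_{a}\subset L_{\bfone}^{|a|_{\bbZ}}$. Hence $L^{\geq j}$ is a color ideal, and $L^{\geq j}=0$ for $j>n_{L}$.

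We then descend inductively. Start with $U_{n_{L}}:=U$ and set $U_{j-1}:=U(L/L^{\geq j})=U_{j}/\langle L_{j}\rangle$, where $L_{j}$ denotes the part of $L/L^{\geq j+1}$ in $\bbZ$-degree $j$. After finitely many steps we reach $U_{1}=U(L/L^{\geq 2})=S_{\varepsilon}(L_{\bfone})$.

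At stage $j$, the homogeneous elements $g\in L_{j}$ of multidegree $a$ with $|a|_{\bbZ}=j$ are central up to scalar in $U_{j}$. Indeed, $[L,L_{j}]\subset L^{\geq j+1}=0$ in $L/L^{\geq j+1}$, so $\theta_{i}g=\varepsilon(e_{i},a)g\theta_{i}$. Thus each such $g$ is normal. It is also regular, since $U_{j}$ is a domain by PBW: its associated graded is an $\varepsilon$-symmetric algebra with $L_{-}=0$. Write $g=[\theta_{i},h]$ with $h\in L$ homogeneous of $\bbZ$-degree $j-1$; then $g=\theta_{i}h-uh\theta_{i}$ with $u=\varepsilon(e_{i},|h|)$. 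Conditions (ii) and (iii) of \dfnref{dfn-one} become $\varepsilon(e_{i},a)=u$ and $\varepsilon(|h|,a)=u^{-1}$ (after the appropriate rewriting). Both hold because $a=e_{i}+|h|$, $\varepsilon(e_{i},e_{i})=1$ and $\varepsilon(|h|,|h|)=1$ (as $L_{-}=0$). So $g$ is a $q'$-Heisenberg normal element of degree $j\leq n_{L}$. Since $L_{j}$ is spanned by such brackets, we choose a homogeneous basis $g_{1},\dots,g_{t}$ of $L_{j}$ consisting of them, and each $g_{s}$ remains $q'$-Heisenberg in $U_{j}/\langle g_{1},\dots,g_{s-1}\rangle$. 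This quotient is again the enveloping algebra of a color Lie algebra with $L_{-}=0$, hence a domain, so regularity persists.

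For each such $g$ of degree $j$ and each $d\geq 2n_{L}-1\geq 2j-1$, \thmref{thm-tru-pmod} says no truncated point module of length $d+1$ is $g$-torsionfree. \propref{prop-torfree-gP} then gives $gP=0$. Therefore truncated point modules of length $d+1$ over the algebra and over its quotient by $\langle g\rangle$ coincide, and this bijection preserves isomorphism classes. Iterating over all $s$ and all $j$ from $n_{L}$ down to $2$ gives the bijection between truncated point modules of length $d+1$ over $U$ and over $S_{\varepsilon}(L_{\bfone})$, with $\omega_{i,j}=\varepsilon(e_{i},e_{j})$.

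For point modules, a point module $P$ over $U$ has truncations $P_{\leq d}$, and the same argument (or the point-module version of \propref{prop-torfree-gP}) gives $gP=0$ for every $g$ above. So point modules over $U$ and over $S_{\varepsilon}(L_{\bfone})$ coincide. The explicit descriptions then follow from \thmref{thm-BDL} for $m\geq 2$, and from \cite{ATV90} for the quadratic algebra $S_{\varepsilon}(L_{\bfone})$ in two variables when $m=1$.

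The main obstacle is verifying the $q'$-Heisenberg conditions exactly, that is, matching the scalars $u$ in (ii) and (iii) under the conventions $\nu_{g}(x)=u^{-1}x$, $\nu_{g}(y)=uy$, including the $u$ versus $u^{-1}$ bookkeeping. A second obstacle is confirming that regularity survives the successive quotients; the plan handles this via PBW.
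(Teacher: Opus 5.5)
Your proposal is correct and follows essentially the paper's route. You repeatedly quotient by a top-degree bracket $g=[\theta_i,h]$ and check that it is a $q'$-Heisenberg normal element using bimultiplicativity of $\varepsilon$ and $L_{-}=0$. You then apply \thmref{thm-tru-pmod} with \propref{prop-torfree-gP} at each step and finish with \thmref{thm-BDL}. The only difference is organizational: you descend explicitly through the degree filtration $L^{\geq j}$ and a basis of brackets, whereas the paper inducts on $\dim_k L$; along the way you spell out the regularity in successive quotients and the passage from truncated to full point modules, which the paper leaves implicit.
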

\begin{proof}
When $n_{L}=1$, since $L=L_{\bfone}$, the argument is clear.
We use induction on $\dimk_{k}L$.
If $\dimk_{k}L=2$ holds, then $n_{L}=1$ holds.
Suppose that $\dimk_{k}L \geq 3$ and $n_{L} \geq 2$.
  Let $g=[x,y]\neq 0$ be an element of $L_{\bfone}^{n_{L}}$.
  By reselecting $g$ if necessary, one can assume that $g=[x,y] \neq 0$ satisfies $x \in h(L_{\bfone})$ and $y \in h(L)$.
  Since $L$ is generated by $L_{\bfone}$, $[g,L]=0$ holds.
  Then, $g$ is a normal element of $U(L)$.
  Also, $g$ is regular since $U(L)$ is a domain (\cite{Price05}).
  Furthermore, the equalities
  $\varepsilon(|x|,|x|+|y|)=\varepsilon(|x|,|x|)\varepsilon(|x|,|y|)=\varepsilon(|x|,|y|)$ 
  and
  $\varepsilon(|x|+|y|,|y|)=\varepsilon(|x|,|y|)\varepsilon(|y|,|y|)=\varepsilon(|x|,|y|)$ hold.
  It follows that
  \[
  \begin{cases}
    g=[x,y]=xy-\varepsilon(|x|,|y|)yx;\\
    0=[x,g]=xg-\varepsilon(|x|,|x|+|y|)gx=xg-\varepsilon(|x|,|y|)gx;\\
    0=[g,y]=gy-\varepsilon(|x|+|y|,|y|)yg=gy-\varepsilon(|x|,|y|)yg.\\
  \end{cases}
  \]
  Hence, $g$ is a $q'$-Heisenberg normal element of $U(L)$ with $\deg g = n_{L} \geq 2$.
Therefore, for $d \geq 2n_{L}-1$, the set of isomorphism classes of truncated point modules of length $d+1$ over $U(L)$ corresponds one-to-one to the one of $U(L)/\langle g \rangle$ by \propref{prop-torfree-gP} and \thmref{thm-tru-pmod}.
By a similar proof of \cite[2.2.14]{Dix}, we have $U(L)/\langle g \rangle \cong U(L/\langle g \rangle)$.
Also, clearly, $d \geq 2n_{L}-1 \geq 2n_{L/\langle g \rangle}-1$ 
and $S_{\varepsilon}(L_{\bfone}) \cong S_{\varepsilon}((L/\langle g \rangle)_{\bfone})$.
By the induction hypothesis, we have a conclusion.
The second claim follows from \thmref{thm-BDL}.
\end{proof}
Note that the second claim of \thmref{thm-color-var} also follows from \propref{prop-fat-point}.
\begin{cor}\label{cor-color-sch}
  Let $L$ be a finite dimensional $(\bbZ^{m+1}, \varepsilon)$-color Lie algebra with $L_{-}=0$ and $n_{L} \geq 2$ generated by $\{\theta_{0}, \ldots , \theta_{m} \}$ with $|\theta_{i}|=e_{i}$ for $0 \leq i \leq m$.
  Suppose that $\Gamma_{j}(U(L))$ is the $j$-th truncated point scheme of $U(L)$ and $\Gamma(U(L))$ is the point scheme of $U(L)$.
  Then, $\mathrm{pr}_{1,2n_{L}-2}^{(2n_{L}-1)}\mathrm{:}\, \Gamma_{2n_{L}-1}(U(L))\rightarrow \Gamma_{2n_{L}-2}(U(L))$ is a closed immersion and identifies $\Gamma_{2n_{L}-1}(U(L))$ with a closed subscheme $E \subset \Gamma_{2n_{L}-2}(U(L))$.
  Moreover, $\Gamma(U(L)) \cong E$ as schemes.
\end{cor}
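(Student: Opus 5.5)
We use Theorem~\ref{thm-color-var}, which shows that for $d \geq 2n_{L}-1$ every truncated point module of length $d+1$ over $U(L)$ is annihilated by the two-sided ideal generated by the $q'$-Heisenberg normal elements used in the induction. Concretely, by the inductive proof there is a chain of regular normal elements $g_{1},\ldots,g_{t}$ (homogeneous of degrees $\leq n_{L}$) such that $U(L)/\langle g_{1},\ldots,g_{t}\rangle \cong S_{\varepsilon}(L_{\bfone})$. All truncated point modules of length $\geq 2n_{L}$ are therefore modules over the skew polynomial algebra $S_{\varepsilon}(L_{\bfone})$. The plan is to compare truncated point schemes scheme-theoretically, not only on closed points.

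First, we identify $\Gamma_{j}(U(L))$ with $\Gamma_{j}(S_{\varepsilon}(L_{\bfone}))$ for $j\geq 2n_{L}-1$. Since $S=S_{\varepsilon}(L_{\bfone})$ is a quotient of $U(L)$, we have $\Gamma_{j}(S)\subset\Gamma_{j}(U(L))$ as a closed subscheme. For the reverse, we would like the argument of Theorem~\ref{thm-tru-pmod} to work over an arbitrary commutative base (families of truncated point modules, i.e.\ $R$-points of $\Gamma_{j}$). This is the main obstacle. The equations used there are polynomial identities in $p_{i},q_{i},\lambda_{i}$, and the final contradiction reads $\lambda_{n}\cdot(\text{unit factors})=0$. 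Over a local base this forces $g$ to act by zero rather than giving a contradiction, so the equations $g\cdot m=0$ lie in the ideal of $\Gamma_{j}$. If a scheme-theoretic version is too delicate, we fall back on the weaker statement. Because all the schemes involved are of finite type over an algebraically closed $k$, we first prove the statement on closed points and reducedness. In any case, the relations of $S$ in degree $\leq 2$, namely $x_{i}x_{j}-\omega_{i,j}x_{j}x_{i}$, are quadratic. So $\Gamma_{j}(S)$ is cut out in $(\bbP^{m})^{\times j}$ by conditions on consecutive pairs of factors, and $\Gamma_{j+1}(S)\to\Gamma_{j}(S)$ has the known structure from \cite{ATV90,BDL}.

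Second, we show that $\mathrm{pr}^{(2n_{L}-1)}_{1,2n_{L}-2}$ is a closed immersion. Its image is the subscheme $E$ consisting of truncated modules of length $2n_{L}-1$ that extend. The projection is a monomorphism because a truncated point module of length $2n_{L}$ is determined by its first $2n_{L}-1$ terms. Given $m_{0},\ldots,m_{2n_{L}-2}$, the last point is forced since the module is an $S$-module generated by $x_{i}$ with $x_{i}m_{j}$ determined by the skew relations and some $x_{i}$ acting nonzero (analogous to Proposition~\ref{prop-tru-torfree}). Together with properness of the projection between projective schemes, this gives that the projection is a closed immersion onto its image $E$.

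Finally, we show $\Gamma(U(L))\cong E$. The same argument shows that each $\mathrm{pr}^{(j+1)}_{1,j}$ for $j\geq 2n_{L}-1$ is a closed immersion that is also surjective, hence an isomorphism. Surjectivity holds because for the skew polynomial algebra every truncated point module in the relevant locus extends: the conditions $(\omega_{i,j}\omega_{j,l}-\omega_{i,l})p_{i}p_{j}p_{l}=0$ are already imposed at length $3$, and the subsequent points are determined by $m_{0}$ via Theorem~\ref{thm-BDL}. Hence the inverse system is constant from $2n_{L}-1$ on, and the inverse limit is $\Gamma_{2n_{L}-1}(U(L))\cong E$.
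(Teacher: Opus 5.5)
There is a genuine gap. Everything you actually prove is pointwise, while the corollary is scheme-theoretic, and your Step~1 cannot be repaired: the identification $\Gamma_j(U(L))=\Gamma_j(S_{\varepsilon}(L_{\bfone}))$ is false as schemes, and so is the reducedness you want to fall back on.

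Take $L=kx\oplus ky\oplus kg$ with $|x|=e_0$, $|y|=e_1$, $g=[x,y]$ central and $u=\varepsilon(e_0,e_1)$. Then $n_L=2$ and $U(L)=k\langle x,y\rangle/\langle x^2y-2uxyx+u^2yx^2,\ xy^2-2uyxy+u^2y^2x\rangle$. There are no quadratic relations, so $\Gamma_2(U(L))=\bbP^1\times\bbP^1$. Write the $i$-th factor as $(a_i:b_i)$ and evaluate the $i$-th letter of a monomial on the $i$-th factor. The two cubic relations are then linear in $(a_3:b_3)$, with a coefficient matrix that never has rank $0$ and whose determinant is $-2u(a_1b_2-u\,b_1a_2)^2$. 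Hence $\Gamma_3(U(L))$ is isomorphic to a double $(1,1)$-curve, whereas $\Gamma_3(S_{\varepsilon}(L_{\bfone}))$ is its (reduced) reduction.

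Your local-base heuristic fails because the proof of Theorem~\ref{thm-tru-pmod} divides by $\lambda_i$ and $p_i$. Over an Artinian base these can be nonzero nilpotents, so the dichotomy ``$g$ acts by zero or invertibly'' is lost. For the same reason your final step breaks. A surjective closed immersion is an isomorphism only onto a reduced target. So bijectivity of $\Gamma_{j+1}(U(L))\to\Gamma_j(U(L))$ on closed points for $j\ge 2n_L-1$ shows constancy of the inverse system only on underlying spaces, not $\Gamma(U(L))\cong E$ as schemes.

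The missing idea is a degree bound on the defining relations. The paper compares the color Koszul resolution (\thmref{thm-color-Kos}) with a minimal resolution of $k$ to show that $U(L)\cong k\langle x_0,\ldots,x_m\rangle/I$ with $I$ generated in degrees $\le 2n_L-1$. Then $\Gamma_r(U(L))$ is cut out, scheme-theoretically, by requiring each block of $2n_L-1$ consecutive factors to lie in $\Gamma_{2n_L-1}(U(L))$. Combined with $\sigma(E)\subset E$ (which you also need and do not address), \cite[Proposition 3.7 (ii)]{ATV90} gives $\Gamma_r(U(L))\cong E$ for all $r\ge 2n_L-1$.

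A smaller point in Step~2: you only have injectivity on $k$-points, not a monomorphism, and ``proper and injective on closed points'' does not give a closed immersion in general (e.g.\ the normalization of a cusp). What makes it work here is that the fibres of $\Gamma_{d+1}\to\Gamma_d$ are linear subspaces of $\bbP^m$, so one-point fibres are reduced points. This is \cite[Proposition 3.6 (ii)]{ATV90}, which the paper applies after obtaining injectivity on closed points exactly as you do, via \thmref{thm-color-var} and \cite{BDL}.
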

\begin{proof}
  For a scheme $X$, the set of closed points of $X$ is denoted by $\Cl(X)$. 
  By \thmref{thm-color-var}, we obtain the equality $\Cl(\Gamma_{2n_{L}-1}(U(L)))=\Cl(\Gamma_{2n_{L}-1}(S_{\varepsilon}(L_{\bfone})))$.
  Also, for a normal element $g$ of $U(L)$ with $g \in h(L^{n_{L}}_{\bfone})$, Propositions \ref{prop-torfree-gP} and \ref{prop-ATV-corres} give $\Cl(\Gamma_{2n_{L}-2}(U(L/\langle g \rangle))) \subset \Cl(\Gamma_{2n_{L}-2}(U(L)))$.
  Inductively, we obtain 
  $\Cl(\Gamma_{2n_{L}-2}(S_{\varepsilon}(L_{\bfone}))) \subset \Cl(\Gamma_{2n_{L}-2}(U(L)))$.
  Then, by \cite[11.2.Q.Exercise]{Vak} and \cite{BDL} (also, see \cite{Vit}), $\mathrm{pr}_{1,2n_{L}-2}^{(2n_{L}-1)}$
  is injective on points.
  By \cite[Proposition 3.6 (ii)]{ATV90}, $\mathrm{pr}_{1,2n_{L}-2}^{(2n_{L}-1)}$ is a closed immersion and identifies $\Gamma_{2n_{L}-1}(U(L))$ with a closed subscheme $E \subset \Gamma_{2n_{L}-2}(U(L))$.
  Also, $\sigma: E \rightarrow \Gamma_{2n_{L}-2}(U(L))$ is defined as $(p_{1},\ldots ,p_{2n_{L}-2}) \mapsto (p_{2},\ldots ,p_{2n_{L}-1})$, where $(p_{1},\ldots ,p_{2n_{L}-1}) \in \Gamma_{2n_{L}-1}(U(L))$ is the point laying over $(p_{1},\ldots ,p_{2n_{L}-2})$ by \cite[Proposition 3.7 (i)]{ATV90}.
  By a similar discussion to that above, $\sigma(E) \subset E$ is obtained.

  Let $I$ be a two-sided ideal of $k\langle x_{0}, \ldots x_{m} \rangle$ such that $U(L) \cong k\langle x_{0}, \ldots x_{m} \rangle/I$, where $\deg x_{j}=1$ for $0 \leq j \leq m$ 
  and $B=\{b_{1}, \ldots , b_{\dimk_{k}L}\}$, a homogeneous $k$-vector basis of $L$.
  We denote the subset $B_{2}$ of $B\times B$ defined as $B_{2}=\{(b_{\ell},b_{\ell'})\in B\times B\mid 1 \leq \ell < \ell' \leq \dimk_{k}L \}$.
  Suppose that $C_{\bullet}=(C_{i},d_{i})$ is the color Koszul resolution of $U(L)$ in \thmref{thm-color-Kos}
  and $P_{\bullet}=(P_{i}, \partial_{i})$ is a minimal free resolution of $k$ as a $\bbZ$-graded $U(L)$-module.
  Then, the following chain maps exist where $f_{\bullet}: P_{\bullet} \rightarrow C_{\bullet}$
  and $\pi: C_{\bullet} \rightarrow P_{\bullet}$ such that $\pi f = \mathrm{id}$.
  Obviously, $P_{2}$ is the summand of 
  $C_{2} \cong \bigoplus_{(b,b') \in B_{2}}U(L)(-|b|_{\bbZ} - |b'|_{\bbZ})$.
  Assume that $P_{2}$ has a direct sum decomposition $P_{2}=Q \oplus Q'$ such that $Q \cong U(L)(-2n_{L})$.
  Let $w$ be a free basis of $Q$ and 
  $f_{2}(w)=\dsum_{(b,b')\in B_{2}} v_{(b,b')}\otimes (b \wedge b')$ be the image of $w$, where $v_{(b,b')} \in h(U(L))$ with $\deg v_{b,b'} + |b|_{\bbZ} + |b'|_{\bbZ} =2n_{L}$.
  Suppose that $z$ is an element of $C_{2}$ defined as $z=\dsum_{ |b|_{\bbZ} + |b'|_{\bbZ} < 2n_{L}} v_{(b,b')}\otimes (b \wedge b')$.
  If $|b|_{\bbZ} + |b'|_{\bbZ} < 2n_{L}$, then $\pi_{2}(1\otimes (b \wedge b')) \in Q'$.
  It follows that $\pi_{2}(z) \in Q'$.
  Meanwhile, if $|b|_{\bbZ} + |b'|_{\bbZ} = 2n_{L}$, then $|b|_{\bbZ} = |b'|_{\bbZ} =n_{L}$.
  Since $U(L)$ is generated by degree $1$ and $n_{L} \geq 2$, the following equality holds\textup{:}
  \begin{align*}
  \partial_{2}\pi_{2}(v_{(b,b')}\otimes (b \wedge b'))
  &=\pi_{1}d_{2}(v_{(b,b')}\otimes (b \wedge b'))
  \\
  &=\pi_{1}\left(v_{(b,b')} b \otimes b'
  -\varepsilon(|b|,|b'|) v_{(b,b')} b' \otimes b \right)
  =0.
  \end{align*}
  It follows that $\partial_{2}(w)=\partial_{2}\pi_{2}f_{2}(w)=\partial_{2}\pi_{2}(z)$, and then $\partial_{2}(P_{2})\subset \partial_{2}(Q')$ is obtained.
  This contradicts the minimality of $P_{\bullet}$.
  Hence, $P_{2}$ is isomorphic to a direct summand of 
  \[
  \Dbigoplus_{|b|_{\bbZ} + |b'|_{\bbZ} < 2n_{L},\, (b,b') \in B_{2}}U(L)(-|b|_{\bbZ} - |b'|_{\bbZ}).
  \]
  This means that $I$ is generated by relations with degree $\leq 2n_{L} -1$ or less by \cite[Lemma 2.1.3]{Rog}. 
  Therefore, by \cite[Proposition 3.7 (ii)]{ATV90}, for $r \geq 2n_{L}$, $\mathrm{pr}_{1,2n_{L}-1}^{(r)}: \Gamma_{r}(U(L)) \rightarrow E$ is an isomorphism and 
  \[
  \Gamma(U(L)) = \displaystyle{\lim_{\leftarrow}} \Gamma_{j}(U(L)) \cong E .
  \]
\end{proof}
\section*{Acknowledgments}
I am deeply grateful to my Ph.D. supervisor, Professor Ayako Itaba, for her invaluable guidance and useful discussions.
I would like to thank Shota Inoue for his helpful comments on this research.
\bibliographystyle{plain}
\bibliography{references.bib}
\end{document}